\documentclass[12pt,twoside]{article}
\usepackage{graphicx}
\usepackage{amsfonts,amsbsy,amssymb,amsmath}
\allowdisplaybreaks
\usepackage{cite}
\usepackage{float}
\usepackage{caption}
\usepackage{graphics}
\usepackage{xcolor}
\usepackage{hyperref}
\usepackage{subfigure}
\usepackage{enumerate}
\def\bpsp{\begin{pspicture}}
\def\epsp{\end{pspicture}}

\newtheorem{theorem}{Theorem}[section]
\newtheorem{remark}[theorem]{Remark}
\newtheorem{example}[theorem]{Example}
\newtheorem{lemma}[theorem]{Lemma}
\newtheorem{corollary}[theorem]{Corollary}
\newtheorem{definition}[theorem]{Definition}
\newtheorem{proposition}[theorem]{Proposition}

\newtheorem{note}{Note}
\newtheorem{case}{Case}

\newtheorem{conjecture}{Conjecture}
\newtheorem{question}{Question}

\newcommand{\bea}{\begin{eqnarray}}
\newcommand{\eea}{\end{eqnarray}}
\newcommand{\beq}{\begin{eqnarray*}}
\newcommand{\eeq}{\end{eqnarray*}}

\def\m4{\mbox{\rm ~(mod $4$)}}

\def \bd{\begin{definition}}
\def \ed{\end{definition}}
\def \bqu{\begin{question}}
\def \equ{\end{question}}
\def \bcc{\begin{conjecture}}
\def \ecc{\end{conjecture}}
\def \bt{\begin{theorem}}
\def \et{\end{theorem}}
\def \bl{\begin{lemma}}
\def \el{\end{lemma}}
\def \bc{\begin{corollary}}
\def \ec{\end{corollary}}
\def \be{\begin{equation}}
\def \ee{\end{equation}}
\def \ben{\begin{enumerate}}
\def \een{\end{enumerate}}
\def \ba{\begin{array}}
\def \ea{\end{array}}
\def \bp{\begin{proposition}}
\def \ep{\end{proposition}}
\def \bx{\begin{example}}
\def \ex{\end{example}}
\def \br{\begin{remark}}
\def \er{\end{remark}}
\def \bdsc{\begin{description}}
\def \edsc{\end{description}}

\def \bn{\begin{case}}
\def \en{\end{case}}
\def \bnt{\begin{note}}
\def \ent{\end{note}}
\def\1{1\!\!1}

\def\mm2{\mbox{\rm ~(mod $2$)}}
\def\m4{\mbox{\rm ~(mod $4$)}}

\def\qed{\nolinebreak\hfill\rule{.2cm}{.2cm}\par\addvspace{.5cm}}

\def\m{\mu}

\def\1{\textbf{1}}
\def\0{\textbf{0}}

\begin{document}
\title{ The spectral extrema of graphs of odd size forbidding $H(4,3)$ beyond the book graph}
\author{Abdul Basit Wani$ ^{a} $, S. Pirzada$ ^{b} $, Amir Rehman $ ^{c} $\\
$^{a,b}${\em Department of Mathematics, University of Kashmir, Srinagar, India}\\
$^{c}${\em Department of Mathematics, Govt. Degree College, Frisal, J\&K, India}\\
$^{a}$\texttt{waniiibasit@gmail.com }; $ ^{b} $\texttt{pirzadasd@kashmiruniversity.ac.in}\\
$^{c}$\texttt{aamirnajar786@gmail.com }
}

\date{}

\pagestyle{myheadings} \markboth{Wani, Pirzada, Rehman}{The spectral extrema of graphs of odd size forbidding $H(4,3)$ beyond the book graph}
\maketitle

\noindent{\footnotesize \bf Abstract.}
A graph is said to be $H$-free if it does not contain a subgraph isomorphic to $H$. The fish graph, denoted by $H(4, 3)$, is a $6-$vertex graph obtained from a cycle of length $4$ and a triangle by sharing a common vertex. Earlier it is shown that $\lambda(G)\leq \frac{1+\sqrt{4m-3}}{2}$ holds for all $H(4,3)-$free graphs of odd size $m\geq 44,$ and the equality holds if and only if $G\cong S_{\frac{m+3}{2},2},$ where $S_{\frac{m+3}{2},2}$ is the $m-$edge book graph $K_2 \vee \frac{m-1}{2}K_1,$ where $K_2 \vee \frac{m-1}{2}K_1,$ denotes the join of $K_2$ and $\frac{m-1}{2}K_1.$ Let $\mathcal{G}(m,H(4,3))$ denote the family of $H(4,3)$-free graphs with $m$ edges and no isolated vertices. We write
\(
\mathcal{G}(m,H(4,3)) \setminus \left\{ K_2 \vee \tfrac{m-1}{2}K_1 \right\}
\)
for the corresponding subfamily obtained by excluding the book graph.
 In this paper, we establish a sharp upper bound on the spectral radius of graphs over $\mathcal{G}(m,H(4,3))\setminus \{K_2 \vee \frac{m-1}{2}K_1\}$ for odd $m\geq 58$ and characterize the unique extremal graph attaining this bound.\\

\vskip 3mm

\noindent{\footnotesize Keywords:  $H$-free graph, Adjacency matrix, Spectral radius, Forbidden subgraph, Perron vector.
}

\vskip 3mm
\noindent {\footnotesize AMS subject classification: 05C50, 05C12, 15A18.}

\section{Introduction}
\indent Let $G(V,E)$ be a simple graph of order $|V(G)|=n$ and size $|E(G)|=m$. Let $A(G)$ be the adjacency matrix of $G$ and $\lambda(G)$ be the spectral radius of $G$. Since $A(G)$ is real symmetric, its eigenvalues are real. By the Perron--Frobenius theorem, if $G$ is connected, then there exists a positive unit eigenvector $\mathbf{x} = (x_1, x_2, \ldots,x_n)^T$ corresponding to $\lambda(G)$, which is called the Perron vector of $G$. Obviously, $0 < x_i < 1$ for  $1 \le i \le n.$ It will be convenient to associate weights to the vertices of $G$ (with respect to $\mathbf{x}$), where $x_v$ is the weight of vertex $v$. Also, the quadratic form $\mathbf{x}^T A(G) \mathbf{x}$ can be written as
\begin{equation*}
\lambda(G) = \mathbf{x}^T A(G) \mathbf{x} = \sum_{uv \in E(G)} 2x_u x_v.
\end{equation*}
The eigenvector equation $A(G)\mathbf{x} = \lambda(G)\mathbf{x}$ can be interpreted as
\begin{equation}\label{111}
(A(G)\mathbf{x})_v = \lambda x_v = \sum_{u \in N_G(v)} x_u,
\end{equation}
for each $v \in V(G)$.\\
\indent Let $\mathcal{F}$ be a set of graphs. We say that a graph $G$ is $\mathcal{F}$-free if it does not contain any member of $\mathcal{F}$ as a subgraph, and $\mathcal{F}$ is called the forbidden set. When $\mathcal{F}$ is a singleton, say $\mathcal{F}=\{F\}$, we simply say that $G$ is $F$-free. Let $\mathcal{G}(m,\mathcal{F})$ denote the set of all $\mathcal{F}$-free graphs with $m$ edges and no isolated vertices.\\
\indent For a subset $S\subset V(G)$, let $G[S]$  denote the subgraph of $G$ induced by $S$. We denote by $e(S,T)$ the number of edges with one end in $S$ and the other end in $T$, where $S$ and $T$ are subsets of $V(G)$. We write $e(S,S)$ by $e(S)$. Let $N_k(v)$ be the set of vertices of distance $k$ from $v$. In particular, $N_1(v)=N(v)$. Let $N[v]=N(v)\cup\{v\}$. For $S \subset V$, let $N_S(v)$ be the set of neighbours of $v$ in $S$ and let $d_S(v)=|N_S(v)|$. If no ambiguity is possible, we put $N(v) = N_S(v)$ and $d(v) = d_S(v).$\\
\indent Given two graphs $G$ and $H$, the graph $G\vee H$ denotes the join and $G\cup H$ denotes the union. Let $kG$ denote the union of $k$ vertex-disjoint copies of $G$. As usual, we use $P_n$, $C_n$, $K_n$, $K_{t,n-t}$ to denote the path, the cycle, the complete graph, the complete bipartite graph of order $n$, respectively. In particular, $K_{1,n-1}$ is also known as the $n$-vertex star. Let $S_{n,2}$ be the graph obtained by joining each vertex of $K_2$ to $n-2$ isolated vertices. Let $H(\ell,3)$ denote the graph obtained from a cycle $C_\ell$ and a triangle by sharing a common vertex. For more definitions and notations, we refer to \cite{SP}.\\
 \indent A classical problem in the extremal graph theory is the Tur\'an problem which asks what is the maximum size of an $H$-free graph of order $n$, where the maximum size is known as the Tur\'an number of $H$. Nikiforov \cite{NF3}, proposed a spectral analogue of the Tur\'an problem which asks what is the maximum spectral radius of an $H$-free graph of size $m$ or order $n$. In \cite{EN}, Nosal proved that $\lambda(G) \le \sqrt{m}$ for every graph of size $m$ when $H$ is a triangle. Lin, Ning and Wu \cite{LNW} improved the bound to $\lambda(G) \le \sqrt{m-1}$, when $G$ is non-bipartite and triangle-free. Nikiforov \cite{NF2} showed that $\lambda(G) \le \sqrt{m}$ for all $C_4$-free graphs of size $m$. The problem has been investigated for various graphs $H$, such as \cite{CLZ,CDT,NF1,NF2,SpAr1, AP,SpAr,ArSp,ZHAI}.\\
\indent In 2023, Li, Lu, and Peng~\cite{LLP} investigated the spectral extremal problem for $F_k$-free graphs with a fixed number of edges. They showed that if $G$ is an $F_2$-free graph with $m$ edges, then
\(
\lambda(G) \le \frac{1+\sqrt{4m-3}}{2},
\)
and equality holds if and only if $G$ is the join of $K_2$ and an independent set of $\tfrac{m-1}{2}$ vertices for $m \ge 8$. It is worth noting that Li et al.\ used the notation $F_2$ to denote the graph $H(3,3)$. However, the above bound cannot be achieved when $m$ is even. This case was subsequently studied by Chen et al.~\cite{L21} and they also derived a sharp upper bound on the spectral radius for graphs in $\mathcal{G}(m,F_2)\setminus \left\{ K_2 \vee \frac{m-1}{2}K_1 \right\}$.
In \cite{ArSp}, it is shown that if $G$ is an  $\{H(3,3), H(4,3)\}$-free graph of odd size, then $\lambda(G)\le\frac{1+\sqrt{4m-3}}{2}$, with equality if and only if $G\cong S_{\frac{m+3}{2},2}$. Zhang and Wang~\cite{L23} and independently by Rehman and Pirzada \cite{RP} characterized the unique extremal graph with maximum spectral radius over all graphs in $\mathcal{G}(m,H(4,3))$ under certain conditions.\\
\indent Motivated by the  aforementioned works, we further investigate the extremal problem for the spectral radius by excluding the extremal graph
$K_2 \vee \tfrac{m-1}{2}K_1$ identified in~\cite{RP,L23} from the family
$\mathcal{G}(m,H(4,3))$. Our main result is stated in the following theorem.

\vskip 3mm
\begin{theorem}\label{MT}
Let $G \in \mathcal{G}(m,H(4,3))\setminus\left\{K_2 \vee \frac{m-1}{2}K_1\right\}$ with odd size $m \ge 58$.
Then $\lambda(G) \le \tilde{\lambda}(m),$
with equality if and only if $G \cong K_1 \vee \left( K_{1,\frac{m-3}{2}} \cup 2K_1 \right),$ where $\tilde{\lambda}(m)$ is the largest root of $x^4 - m x^2 - (m-3)x + m - 3 = 0.$
\end{theorem}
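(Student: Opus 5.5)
Throughout, let $G^*=K_1\vee\left(K_{1,\frac{m-3}{2}}\cup 2K_1\right)$. Let $u^*$ be the dominating vertex, $w$ the centre of the star, $L$ the $\frac{m-3}{2}$ leaves of the star, and $z_1,z_2$ the two pendant vertices attached to $u^*$.

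The first step is to verify that $\tilde\lambda(m)$ is the spectral radius of $G^*$ and that $G^*$ is $H(4,3)$-free. Use the equitable partition $\{u^*\},\{w\},L,\{z_1,z_2\}$. Its quotient matrix has characteristic polynomial $x^4-mx^2-(m-3)x+m-3$ (up to sign), so $\lambda(G^*)=\tilde\lambda(m)$. $H(4,3)$-freeness holds because any $C_4$ in $G^*$ passes through both $u^*$ and $w$ and uses two vertices of $L$; a disjoint triangle at a common vertex would then need an edge among the remaining neighbours, and there is none.

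Now let $G$ be an extremal graph in $\mathcal{G}(m,H(4,3))\setminus\{K_2\vee\frac{m-1}{2}K_1\}$ with $\lambda=\lambda(G)\ge\tilde\lambda(m)$. We may assume $G$ is connected, since deleting components only lowers the radius and adding edges elsewhere is handled by the size count. From the polynomial one gets $\tilde\lambda(m)>\frac{1+\sqrt{4m-3}}{2}-\epsilon$, with $\lambda^2-\lambda>m-2$ roughly (compute $f(\frac{1+\sqrt{4m-7}}2)<0$ or similar). Let $\mathbf x$ be the Perron vector and $u^*$ a vertex of maximum weight, with $A=N(u^*)$ and $B=V\setminus N[u^*]$. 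The standard identity is
\[
(\lambda^2-\lambda)x_{u^*}= d(u^*)x_{u^*}+\sum_{v\in A}(d_A(v)-1)x_v+\sum_{w\in B}d_A(w)x_w .
\]
Combining it with $\lambda^2-\lambda>m-2$ and $m=d(u^*)+e(A)+e(A,B)+e(B)$ gives
\[
e(A)-\sum_{v\in A}\frac{x_v}{x_{u^*}}+e(B)+\sum_{w\in B}d_A(w)\Bigl(1-\frac{x_w}{x_{u^*}}\Bigr)<\sum_{v\in A}(d_A(v)-1)\Bigl(1-\frac{x_v}{x_{u^*}}\Bigr)+2,
\]
and roughly $e(A)<|\{\text{non-isolated in }G[A]\}|+2$.

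Next we determine the structure of $G[A]$. Since $G$ is $H(4,3)$-free, $G[A]$ contains no $P_3\cup K_2$-type configuration that, together with $u^*$, forms a triangle sharing $u^*$ with a $C_4$. Concretely, if $G[A]$ has an edge $ab$ and a path $c\,d\,e$ disjoint from it, then $u^*ab$ and $u^*cde$ form $H(4,3)$. Hence each component of $G[A]$ containing a $P_3$ is essentially the only nontrivial component, and so the nontrivial components of $G[A]$ are either a set of independent edges or a single component that is a star or a triangle, possibly with isolated vertices. Using the inequality above together with the eigen-equations, we rule out the case of $\ge 2$ independent edges except the book, and rule out triangles and $B\neq\emptyset$ by showing that $e(B)+\sum_B d_A(w)$ is forced to be small, moving pendant structure onto $u^*$ and increasing $\lambda$ by a Wu–Xiao–Hong transfer. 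This leaves $G[A]$ as a star $K_{1,s}$ plus isolated vertices, with $B=\emptyset$, and with $G\ne$ book forcing at least one extra isolated vertex in $A$.

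Finally, we compare within the family $K_1\vee(K_{1,s}\cup tK_1)$ with $2s+t+1=m$ and $t$ odd (by parity of $m$), where $t=1$ corresponds to the book. We show $\lambda$ decreases in $t$ by comparing quotient polynomials, so the optimum among $t\ge3$ is $t=3$. Note that $t=3$ means the star plus $2$ extra isolated vertices, i.e.\ $G^*$. The main obstacle is the middle step: bounding $B$ and excluding the alternative $G[A]$ shapes with sufficiently tight estimates, which is where $m\ge58$ is needed. I would first try bounding $x_v/x_{u^*}$ for $v\in B$ via $\lambda x_w\le d_A(w)x_{u^*}+\ldots$ to get $\le 2/\lambda$-type estimates.
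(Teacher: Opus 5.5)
Your overall plan (take $u^*$, $A=N(u^*)$, $B$, force $G[A]$ to be a star, show $B=\emptyset$, then compare inside $K_1\vee(K_{1,s}\cup tK_1)$) matches the paper's. However, the steps that carry the proof are either wrong or missing.

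\textbf{The central inequality is reversed, and a step is missing.} Bound $x_v\le x_{u^*}$ in the identity and use $\lambda^2-\lambda>m-2$ and $m=|A|+e(A)+e(A,B)+e(B)$. This gives
\[
e(B)<e(A_+)-|A_+|+2-\sum_{v\in A_0}\frac{x_v}{x_{u^*}},
\]
where $A_+$ and $A_0$ are the non-isolated and isolated vertices of $G[A]$. This is a \emph{lower} bound on $e(A_+)-|A_+|$. It excludes $G[A_+]$ being a forest with two or more components: $k\ge2$ independent edges give $e(A_+)-|A_+|=-k$. For a single star it yields $e(B)=0$ and $\sum_{v\in A_0}x_v<x_{u^*}$, which are used throughout afterwards.

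Your displayed inequality has $e(A)$ and $\sum_{v\in A}(d_A(v)-1)(1-x_v/x_{u^*})$ on the wrong sides. It already fails for the book graph: the left side is $\frac{m-3}{2}-\frac{m-1}{\lambda}\approx\frac m2-\sqrt m$ and the right side is $2$. Your conclusion $e(A)<|A_+|+2$ points the wrong way and holds for every matching, so it cannot exclude the independent-edge case you say you exclude.

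Your structural list is also incomplete. Every connected $4$-vertex graph ($P_4$, $C_4$, the paw, $K_4-e$, $K_4$) avoids $P_3\cup K_2$; for instance $K_1\vee(K_4\cup tK_1)$ is $H(4,3)$-free. These, and the triangle, are removed only by a step you do not have. If $e(A_+)\le 6$, then
\[
\lambda^2x_{u^*}\le(m-e(B)+e(A_+))x_{u^*}\le(m+6)x_{u^*},
\]
and $\sqrt{m+6}\le\frac{1+\sqrt{4m-7}}{2}$ holds precisely when $m\ge58$. This is where $m\ge58$ is used, not in bounding $B$. Once $e(A_+)\ge7$, Lemma~\ref{lem24} and the inequality above give $G[A]=K_{1,r}\cup|A_0|K_1$ with $r\ge7$.

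\textbf{The proof that $B=\emptyset$ is missing.} This is the bulk of the argument; you only gesture at it, and the tool you name fails in the hard case. Some cases are easy:
\begin{itemize}
\item Degree-one vertices of $B$ are handled by a single transfer to $u^*$.
\item If some $w\in B$ is adjacent to the star centre $v_0$, then $H(4,3)$-freeness and $e(B)=0$ force $N(w)\subseteq\{v_0\}\cup A_0$. The eigen-equations then give $x_w<x_{v_0}$, and an edge $wu_i$ with $u_i\in A_0$ can be moved to $v_0$.
\end{itemize}
The hard case is when every $w\in B$ has all its neighbours among the leaves $v_1,\dots,v_r$. Then $u^*$ and $v_0$ are already adjacent to all of $N(w)$, so Lemma~\ref{lem25} cannot hand these edges to them. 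Replacing $wv_i$ by $wu^*$ puts $w$ into $A$ together with an edge $wv_j$ (since $d(w)\ge2$) disjoint from a path $v_av_0v_b$, which creates $H(4,3)$.

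The paper handles this hard case in two ways:
\begin{itemize}
\item When $A_0\neq\emptyset$, it uses a global rearrangement. It deletes all edges at $B$, adds $\lfloor b/2\rfloor$ new vertices joined to both $u^*$ and $v_0$ (with $b=\sum_{w\in B}d(w)$), adds $v_0u_1$ if $b$ is odd, and compares via $\mathbf{y}^{T}(A(G')-A(G))\mathbf{z}$ using both Perron vectors.
\item When $A_0=\emptyset$, it uses Lemmas~\ref{lem26} and~\ref{lem27} to force $\lambda/2\le d_A(w)<\lambda$. It then uses a quotient-matrix computation for $|B|=1$ and an eigen-equation estimate for $|B|\ge2$.
\end{itemize}

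\textbf{Parity slip at the end.} With $2s+t+1=m$ and $m$ odd, $t$ is even: $t=0$ is the book and $t=2$ is $G^*$. What must be shown is that $t\ge4$ gives $\lambda<\frac{1+\sqrt{4m-7}}{2}$. This follows from $f_t>f_4$ on $x>0$ for $t\ge6$, together with $f_4(x)=(x^2+x-1)(x^2-x-m+2)+2x+m-8$.
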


\section{\noindent \textbf{Preliminaries}}
In this section, we present some necessary preliminary results, which will be used to prove our main results.\\
\indent Let $M$ be a real symmetric matrix whose rows and columns are indexed by
$V=\{1,2,\ldots,n\}$. Assume that $M$ can be written as

\[M=
\begin{pmatrix}
M_{11} & \cdots & M_{1s} \\
\vdots & \ddots & \vdots \\
M_{s1} & \cdots & M_{ss}
\end{pmatrix}
\]\\
according to the partition $\pi: V = V_1 \cup V_2 \cup \cdots \cup V_s,$
where $M_{ij}$ denotes the submatrix (block) of $M$ formed by the rows in $V_i$ and the columns in $V_j$. Let $q_{ij}$ denote the average row sum of the block $M_{ij}$. Then the matrix $M_{\pi} = (q_{ij})$
is called the \emph{quotient matrix} of $M$ with respect to the partition $\pi$. If the row sum of each block $M_{ij}$ is constant, then the partition $\pi$ is called an \emph{equitable partition}.\\
 A well known observation in the literature of spectral graph theory is as follows.
\begin{lemma}{\em \cite{L26}} \label{lem28}
    Let $ M$ be a real matrix with an equitable partition $\pi$, and let $M_\pi$ be the corresponding quotient matrix. Then every eigenvalue of $M_\pi$ is an eigenvalue of $M$. Furthermore, if $M$ is nonnegative and irreducible, the largest eigenvalues of $M$ and $M_\pi$ are equal.
\end{lemma}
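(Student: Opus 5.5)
The plan is to encode the partition by its characteristic matrix and turn the equitability condition into a matrix identity. Let $P$ be the $n\times s$ matrix with $P_{vj}=1$ if $v\in V_j$ and $P_{vj}=0$ otherwise.

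The first step is to show that equitability means exactly $MP=PM_\pi$. Take $v\in V_i$. The $(v,j)$ entry of $MP$ is the row sum of the block $M_{ij}$ in row $v$. Because the partition is equitable, this row sum is the same constant for every $v\in V_i$, so it equals the average $q_{ij}$. On the other side, the $(v,j)$ entry of $PM_\pi$ is also $q_{ij}$. Hence $MP=PM_\pi$.

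For the first assertion, let $M_\pi y=\theta y$ with $y\neq 0$ (complex entries allowed). Then
\[
M(Py)=PM_\pi y=\theta\, Py .
\]
The columns of $P$ are nonzero and have disjoint supports, so $P$ has full column rank and $Py\neq 0$. Therefore $Py$ is an eigenvector of $M$ for $\theta$, and every eigenvalue of $M_\pi$ is an eigenvalue of $M$.

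For the second assertion, assume $M$ is nonnegative and irreducible. The entries $q_{ij}$ are averages of nonnegative numbers, so $M_\pi$ is nonnegative. By the Perron--Frobenius theorem for nonnegative matrices, $\rho(M_\pi)$ is an eigenvalue of $M_\pi$ with a nonnegative eigenvector $y\neq 0$.

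By the previous step, $Py$ is a nonzero eigenvector of $M$ for the eigenvalue $\rho(M_\pi)$, and $Py$ is nonnegative. For an irreducible nonnegative matrix, the Perron--Frobenius theorem says that any nonnegative eigenvector must belong to the eigenvalue $\rho(M)$. Hence $\rho(M_\pi)=\rho(M)$, so the largest eigenvalues of $M$ and $M_\pi$ agree.

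An equivalent check is available: the first part gives $\rho(M_\pi)\le\rho(M)$, since the spectrum of $M_\pi$ lies inside that of $M$. The Perron-vector argument supplies the reverse inequality.

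The only delicate point is this final step. It relies on the uniqueness statement of Perron--Frobenius for irreducible matrices: no eigenvalue other than $\rho(M)$ has a nonnegative eigenvector. That statement is exactly what irreducibility provides. The rest of the argument is the routine verification of $MP=PM_\pi$.
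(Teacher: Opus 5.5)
Your proof is correct. The paper gives no argument of its own and only cites the lemma, and what you wrote is the standard proof: the identity $MP=PM_\pi$ for the characteristic matrix, lifting eigenvectors via $y\mapsto Py$, and the Perron--Frobenius fact that a nonzero nonnegative eigenvector of an irreducible nonnegative matrix must belong to $\rho(M)$. The one point worth stating explicitly is that for a nonnegative matrix the spectral radius is itself the largest real eigenvalue; this is what justifies reading ``largest eigenvalue'' as $\rho$ for both $M$ and $M_\pi$.
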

\begin{lemma}{\em \cite{L21}} \label{lem20}
    If $m \ge 6$, then $\lambda\left(K_{1}\vee\bigl(K_{1,\frac{m-3}{2}}\cup 2K_{1}\bigr)\right)$is the largest root of $x^{4}-m x^{2}-(m-3)x+m-3=0,$
and $\lambda\!\left(K_{1}\vee\bigl(K_{1,\frac{m-3}{2}}\cup 2K_{1}\bigr)\right)> \frac{1+\sqrt{4m-7}}{2}.$
\end{lemma}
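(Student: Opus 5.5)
The approach is to compute the spectral radius through an equitable partition, which Lemma \ref{lem28} makes legitimate, and then to evaluate the resulting quartic at the comparison value. Write $G^*=K_1\vee\bigl(K_{1,\frac{m-3}{2}}\cup 2K_1\bigr)$ and $s=\frac{m-3}{2}$. Let $u$ be the dominating vertex, $w$ the centre of the star, $L$ the set of $s$ leaves of the star, and $\{a,b\}$ the two vertices that are isolated in $K_{1,s}\cup 2K_1$. The edge count is $(1+s+2)+s=2s+3=m$, as it should be.

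\textbf{Step 1: the partition is equitable.} Take $\pi:\ \{u\}\cup\{w\}\cup L\cup\{a,b\}$. Each class has constant row sums in every block, so $\pi$ is equitable. Its quotient matrix is
\[
M_\pi=\begin{pmatrix}0&1&s&2\\ 1&0&s&0\\ 1&1&0&0\\ 1&0&0&0\end{pmatrix}.
\]
Since $G^*$ is connected, $A(G^*)$ is nonnegative and irreducible. Lemma \ref{lem28} then gives $\lambda(G^*)=\lambda(M_\pi)$, the largest eigenvalue of $M_\pi$.

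\textbf{Step 2: the characteristic polynomial.} I will expand $\det(xI-M_\pi)$ directly, which is routine cofactor expansion. As a consistency check, the coefficient of $x^2$ is minus the sum of the products $q_{ij}q_{ji}$ over pairs, namely $-(1+s+2+s)=-m$. I expect the full expansion, after substituting $s=\frac{m-3}{2}$, to give
\[
x^4-mx^2-(m-3)x+m-3 .
\]
Hence $\lambda(G^*)$ is the largest root of this quartic, which is the first claim.

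\textbf{Step 3: the lower bound.} Let $f(x)=x^4-mx^2-(m-3)x+m-3$ and $\theta=\frac{1+\sqrt{4m-7}}{2}$, so that $\theta^2=\theta+m-2$. Squaring this relation gives $\theta^4=(2m-3)\theta+(m-1)(m-2)$. Substituting into $f$,
\[
f(\theta)=(2m-3)\theta+(m-1)(m-2)-m\theta-m(m-2)-(m-3)\theta+m-3 .
\]
The $\theta$-terms cancel, since $(2m-3)-m-(m-3)=0$. The constant terms leave $-(m-2)+m-3=-1$, so $f(\theta)=-1<0$. Because $f$ is monic, $f(x)\to+\infty$ as $x\to\infty$. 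By the intermediate value theorem $f$ has a root larger than $\theta$, so the largest root satisfies $\lambda(G^*)>\theta=\frac{1+\sqrt{4m-7}}{2}$.

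The hypothesis $m\ge 6$ (with $m$ odd) is only needed so that $s\ge 1$ and the graph is as described. The only real work is the determinant expansion in Step 2, which is mechanical; the inequality is the clean evaluation $f(\theta)=-1$.
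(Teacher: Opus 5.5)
Your argument is correct. The paper states this lemma without proof, but it is the same technique the paper uses elsewhere: your $M_\pi$ is exactly the matrix $A_{\pi_2}$ from the proof of Theorem~\ref{MT} at $t=2$, and your value $f(\theta)=-1$ is the $t=2$ case of the paper's division by $q(x)=x^2-x-(m-2)$, namely $f_2(x)=(x^2+x-1)q(x)-1$. The determinant you only anticipated in Step 2 comes out as claimed: expanding along the last row gives $x\bigl(x^3-(2s+1)x-2s\bigr)-2(x^2-s)=x^4-(2s+3)x^2-2sx+2s$, and you should write that line in rather than leave it as an expectation.
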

\indent The next lemma gives the local structure of  $ H(4, 3)-$ free graphs.
\begin{lemma}{\em \cite{L23}} \label{lem24}
 Let $G$ be an $ H(4, 3)-$ free graph and $u$ be any vertex of $V(G).$ If $G[N_G(u)]$
contains at least $7$ edges, then each non-trivial component of $G[N_G(u)]$ is isomorphic
to a star $ K_{1,r} $ for some $r\geq 1.$
\end{lemma}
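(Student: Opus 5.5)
The plan is to work inside the graph $H=G[N_G(u)]$ and turn the $H(4,3)$-freeness of $G$ into a purely local forbidden configuration in $H$. Suppose $H$ contains an edge $ab$ and a path $c\,d\,e$ on three vertices, with $\{a,b\}\cap\{c,d,e\}=\emptyset$. Then $uab$ is a triangle and $u\,c\,d\,e\,u$ is a $4$-cycle, since $c$ and $e$ are neighbours of $u$. These two cycles share only the vertex $u$, so together they form a copy of $H(4,3)$ in $G$. Hence the key observation is:
\begin{center}
$H$ contains no edge vertex-disjoint from a path $P_3$.
\end{center}
The proof then reduces to showing that a graph $H$ with this property and at least $7$ edges has only stars as non-trivial components. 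Suppose some non-trivial component $C$ of $H$ is not a star. Then $C$ contains either a triangle or a path $P_4$, since a connected triangle-free graph without $P_4$ is a star. I would treat these two cases separately.

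\emph{Case 1: $C$ contains a triangle $xyz$.} Every edge of $H$ must meet $\{x,y,z\}$, since otherwise it is disjoint from the path $x\,y\,z$. So all edges outside the triangle join a triangle vertex to an outside vertex, and at least $4$ such edges exist. Two facts follow.
\begin{enumerate}
\item Two distinct outside vertices $v,w$ cannot be attached to distinct triangle vertices. If $v\sim x$ and $w\sim y$, then the path $v\,x\,z$ and the edge $yw$ are disjoint.
\item No outside vertex $v$ is adjacent to two triangle vertices, say $x$ and $y$. There are at least $4$ outside edges, so some outside edge is incident to a vertex $w\neq v$. Then $w$ and $v$ are attached to distinct triangle vertices, which contradicts the first fact.
\end{enumerate}
Hence all outside edges go from one triangle vertex, say $x$, to distinct vertices $v_1,\dots,v_k$ with $k\ge 4$. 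But then the path $v_1\,x\,v_2$ is disjoint from the edge $yz$, a contradiction.

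\emph{Case 2: $C$ is triangle-free and contains a path $a\,b\,c\,d$.} Every edge with both ends outside $\{a,b,c,d\}$ is disjoint from $a\,b\,c$, so it is forbidden. This applies in particular to every edge of any other component. An edge from $a$ (resp.\ $b$) to a new vertex $w$ gives the path $w\,a\,b$ (resp.\ $w\,b\,a$), which is disjoint from $cd$. Symmetrically, any edge from $c$ or $d$ to a new vertex is forbidden. Triangle-freeness excludes the chords $ac$ and $bd$. So the only possible edges are the three path edges and possibly $ad$, which gives at most $4<7$ edges, a contradiction.

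I expect the main obstacle to be the bookkeeping in Case 1, where one must make sure that every way of attaching the $\ge 4$ extra edges is excluded. The two facts above handle this.
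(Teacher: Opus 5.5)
Your proof is correct and complete. The paper does not prove this lemma; it quotes it from Zhang and Wang, so there is no in-paper argument to compare your route against.

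Your reduction is sound. The edge $ab$ and the path $c\,d\,e$ in $G[N_G(u)]$ give the triangle $uab$ and the $4$-cycle $u\,c\,d\,e\,u$, which share only $u$. So $G[N_G(u)]$ contains no $P_3$ vertex-disjoint from an edge. Both branches of your case analysis check out:
\begin{itemize}
\item In the triangle case, your Facts 1 and 2 correctly force all outside edges to run from a single triangle vertex to distinct outside vertices.
\item In the triangle-free $P_4$ case, at most $4$ edges survive, which contradicts the $7$-edge hypothesis.
\end{itemize}
You use without proof that a connected, triangle-free graph with no $P_4$ is a star. This is a routine longest-path argument, so it is fine to leave it implicit.

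One further point: your Case 1 is exactly where the threshold of $7$ edges matters. In Fact 2 you need at least $4$ edges outside the triangle to force an outside vertex $w\neq v$. This is sharp: $K_5$ is $H(4,3)$-free because it has only $5$ vertices, yet every vertex neighbourhood induces $K_4$, which has $6$ edges and is not a star.
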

\indent The following lemma  plays an important role in comparing the spectral radii of graphs obtained via edge relocation.
\begin{lemma}{\em \cite{L24}} \label{lem25}
  Let $G$ be a connected graph and $ X=(x_1,x_2,\dots,x_n)^T $ be its Perron vector, where $x_i$ is corresponding to $ v_i(1\leq i\leq n)$. Assume that $u,v$ are two distinct vertices in $V(G)$ with $\{v_i|i=1,2,\dots,s\}\subseteq N_G(v)\setminus N_G(u)$. Let $ G^*=G-\{vv_i|1\leq i\leq s\}+\{uv_i|1\leq i\leq s\}$. If $x_u\geq x_v,$ then $ \lambda(G)< \lambda(G^*)$.
\end{lemma}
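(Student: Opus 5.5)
The plan is to compare the two spectral radii through the Rayleigh quotient, using the Perron vector $X$ of $G$ as a test vector for $A(G^*)$, and then to rule out equality with the eigenvalue equation \eqref{111} at the vertex $v$. Throughout we assume $s\ge 1$, since otherwise $G^*=G$ and there is nothing to prove. We also assume $v_i\neq u$ for every $i$, since otherwise $uv_i$ would be a loop; this is implicit in the statement. Note that $G^*$ is again a simple graph: each $v_i\notin N_G(u)$, so every added edge $uv_i$ is new.

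\textbf{Step 1 (weak inequality).} Write $\lambda=\lambda(G)$ and recall $\lambda=X^TA(G)X=\sum_{xy\in E(G)}2x_xx_y$ with $\|X\|=1$. Passing from $G$ to $G^*$ deletes the edges $vv_i$ and adds the edges $uv_i$. Hence
\[
X^TA(G^*)X-X^TA(G)X=2\sum_{i=1}^{s}x_{v_i}\,(x_u-x_v)\ \ge 0,
\]
because $x_u\ge x_v$ and all entries of $X$ are positive. Since $A(G^*)$ is real symmetric, the Rayleigh principle gives
\[
\lambda(G^*)\ \ge\ X^TA(G^*)X\ \ge\ \lambda .
\]

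\textbf{Step 2 (strictness).} Suppose, for contradiction, that $\lambda(G^*)=\lambda$. Then both inequalities in Step 1 are equalities. In particular the unit vector $X$ attains the maximum of the Rayleigh quotient of the symmetric matrix $A(G^*)$. It is therefore an eigenvector of $A(G^*)$ for $\lambda(G^*)=\lambda$, and this holds whether or not $G^*$ is connected. Now read off the eigenvalue equation \eqref{111} for $G^*$ at the vertex $v$. Since $v$ has lost exactly the neighbours $v_1,\dots,v_s$ (and $v\neq u$), we get
\[
\lambda x_v=\sum_{w\in N_{G^*}(v)}x_w=\sum_{w\in N_G(v)}x_w-\sum_{i=1}^{s}x_{v_i}=\lambda x_v-\sum_{i=1}^{s}x_{v_i}.
\]
This forces $\sum_i x_{v_i}=0$, which contradicts $x_{v_i}>0$ and $s\ge1$. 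Hence $\lambda(G^*)>\lambda(G)$.

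\textbf{Main obstacle.} The only delicate point is Step 2. The case $x_u=x_v$ gives no strict gain in the quadratic form, so strictness cannot come from Step 1. Moreover, $G^*$ may be disconnected, so we cannot argue via uniqueness of a positive Perron vector of $G^*$. The argument sidesteps both issues: any maximiser of the Rayleigh quotient is an eigenvector, and the vertex $v$, which lost positive weight from its neighbourhood, then violates the eigenvalue equation.
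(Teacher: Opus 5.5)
The paper gives no proof of this lemma; it is quoted from the cited reference. Your argument is correct and is essentially the standard proof of this edge-rotation lemma. The Rayleigh quotient with the Perron vector $X$ of $G$ gives $\lambda(G^*)\ge\lambda(G)+2(x_u-x_v)\sum_{i}x_{v_i}\ge\lambda(G)$. Equality would make $X$ an eigenvector of $A(G^*)$ for $\lambda(G)$, which fails at $v$ (or at $u$), because the neighbourhood sum there changes by $\mp\sum_i x_{v_i}\neq 0$.

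Your caveats $s\ge 1$ and $v_i\neq u$ are exactly the implicit hypotheses. You are also right not to rely on connectivity of $G^*$, which can genuinely fail. For example, $v$ becomes isolated when $u\notin N_G(v)$, $N_G(u)\cap N_G(v)=\emptyset$, and all of $N_G(v)$ is moved.
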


\begin{lemma}{\em \cite{L25}} \label{lem26}
  If $\lambda(G) > \frac{1+\sqrt{4m-5}}{2} $ and there exists a vertex $v_{j}$ of $G$ such that $x_{j} < (1-\alpha)x_{u^*}$, where $0<\alpha<1,$ then
  \begin{align*}
  e(W)< e(U) - |U_+| + \frac{3}{2}- \alpha\, d_{U}(v_{j}),\quad \text{for } v_{j}\in W,
  \end{align*}
and
 \begin{align*}
 e(W)< e(U) - |U_+| + \frac{3}{2}- \alpha\bigl(d_{U}(v_{j})-1\bigr),\quad \text{for } v_{j}\in U_+.
   \end{align*}
\end{lemma}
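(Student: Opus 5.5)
Throughout I use the notation of \cite{L25} for this lemma: $G$ is connected with $m$ edges, $\mathbf{x}$ is its Perron vector, $u^*$ is a vertex with $x_{u^*}=\max_v x_v$, $U=N_G(u^*)$, $W=V(G)\setminus N_G[u^*]$, $U_+$ is the set of vertices of $U$ with at least one neighbour in $U$, and $U_0=U\setminus U_+$. The plan is the standard double-counting argument for $\lambda^2-\lambda$ at the vertex $u^*$.

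\textbf{Step 1 (eigen-equation at $u^*$).} Apply \eqref{111} twice at $u^*$ to get
\[
\lambda^2 x_{u^*}=|U|\,x_{u^*}+\sum_{u\in U} d_U(u)x_u+\sum_{w\in W} d_U(w)x_w .
\]
Subtracting $\lambda x_{u^*}=\sum_{u\in U}x_u$ gives
\[
(\lambda^2-\lambda)x_{u^*}=|U|\,x_{u^*}+\sum_{u\in U_+}(d_U(u)-1)x_u-\sum_{u\in U_0}x_u+\sum_{w\in W} d_U(w)x_w .
\]

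\textbf{Step 2 (lower bound from the hypothesis).} Write $s=\sqrt{4m-5}$. Then $\big(\tfrac{1+s}{2}\big)^2-\tfrac{1+s}{2}=\tfrac{s^2-1}{4}=m-\tfrac32$. Since $x\mapsto x^2-x$ is increasing for $x\ge 1$, the hypothesis gives $\lambda^2-\lambda>m-\tfrac32$.

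\textbf{Step 3 (upper bound, case $v_j\in W$).} Drop the nonpositive $U_0$ term. Bound every $x_u$ and $x_w$ by $x_{u^*}$, except $x_{j}<(1-\alpha)x_{u^*}$.

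\begin{itemize}
\item Since $d_U(u)=0$ on $U_0$, we have $\sum_{u\in U_+}(d_U(u)-1)=2e(U)-|U_+|$.
\item The $W$-sum is less than $\big(e(U,W)-\alpha d_U(v_j)\big)x_{u^*}$.
\end{itemize}

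Combining with Step 2 and dividing by $x_{u^*}>0$:
\[
m-\tfrac32<|U|+2e(U)-|U_+|+e(U,W)-\alpha d_U(v_j).
\]
Substitute $m=|U|+e(U)+e(U,W)+e(W)$, which counts edges at $u^*$, inside $U$, between $U$ and $W$, and inside $W$. Cancelling gives $e(W)<e(U)-|U_+|+\tfrac32-\alpha d_U(v_j)$.

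\textbf{Step 4 (case $v_j\in U_+$).} The argument is identical, except that the saving now comes from the term $(d_U(v_j)-1)x_j\le(1-\alpha)(d_U(v_j)-1)x_{u^*}$. This term is nonnegative because $v_j\in U_+$, and it yields the loss $\alpha(d_U(v_j)-1)$ instead. The strict inequality still comes from Step 2.

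There is no real obstacle. The only delicate points are correctly splitting $U$ into $U_+$ and $U_0$, so that the $-|U_+|$ term appears exactly, and keeping track of which terms may be discarded.
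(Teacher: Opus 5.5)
Your proof is correct and is essentially the standard argument behind this cited lemma (the paper does not reprove it). It is the same expansion of $(\lambda^2-\lambda)x_{u^*}$ as in \eqref{E3} that the paper itself uses to derive \eqref{555} and \eqref{W}: bound every entry by $x_{u^*}$ except $x_j$, substitute \eqref{310}, and take strictness from $\lambda^2-\lambda>m-\tfrac32$. One small nit: the $W$-sum bound is only $\le$, not $<$, when $d_U(v_j)=0$, which is harmless because strictness already comes from Step 2.
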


\begin{lemma}{\em \cite{L21}} \label{lem27}
    If $\lambda(G) >\frac{ 1+\sqrt{4m-7}}{2}$ and there exists a vertex $v_{j}$ of $G$ such that $x_{j} < (1-\alpha)x_{u^*},$ where  $0<\alpha<1,$ then
 \begin{align*}
 e(W)< e(U) - |U_+| + 2- \alpha\, d_{U}(v_{j}),\quad \text{for } v_{j}\in W,
   \end{align*}
and
 \begin{align*}
 e(W)< e(U) - |U_+| + 2- \alpha\bigl(d_{U}(v_{j})-1\bigr),\quad \text{for } v_{j}\in U_+.
  \end{align*}
\end{lemma}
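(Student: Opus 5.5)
The plan is to run the standard double-counting argument on the eigen-equation at $u^*$. Here $u^*$ is a vertex whose Perron entry $x_{u^*}$ is maximum, and I use the notation of \cite{L21,L25}: $U=N_G(u^*)$, $W=V(G)\setminus N_G[u^*]$, $U_+$ the set of vertices of $U$ having at least one neighbour in $U$, and $U_0=U\setminus U_+$. Since $G$ has no isolated vertices, every edge lies in exactly one of the four classes: edges at $u^*$, $E(G[U])$, $E(U,W)$, $E(G[W])$. Hence
\[
m=|U|+e(U)+e(U,W)+e(W).
\]

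First I translate the hypothesis on $\lambda=\lambda(G)$. From $\lambda>\frac{1+\sqrt{4m-7}}{2}$ we get $(2\lambda-1)^2>4m-7$, that is,
\[
\lambda^2-\lambda>m-2 .
\]
Next I apply (\ref{111}) twice at $u^*$. We have $\lambda^2x_{u^*}=\sum_{u\in U}\lambda x_u=\sum_{u\in U}\sum_{v\in N(u)}x_v$. Splitting the inner sums according to whether $v=u^*$, $v\in U$, or $v\in W$, and subtracting $\lambda x_{u^*}=\sum_{u\in U}x_u$, gives
\[
(\lambda^2-\lambda)x_{u^*}=|U|x_{u^*}+\sum_{u\in U_+}\bigl(d_U(u)-1\bigr)x_u-\sum_{u\in U_0}x_u+\sum_{w\in W}d_U(w)x_w .
\]

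Now I bound the right-hand side. The $U_0$-sum is nonpositive, so I drop it. Every other coefficient is nonnegative, so I bound each $x_v$ by $x_{u^*}$, with one exception: for the distinguished vertex $v_j$ I use $x_j<(1-\alpha)x_{u^*}$. We have $\sum_{u\in U_+}(d_U(u)-1)=2e(U)-|U_+|$ and $\sum_{w\in W}d_U(w)=e(U,W)$. This yields
\[
(\lambda^2-\lambda)x_{u^*}\le \bigl(|U|+2e(U)-|U_+|+e(U,W)\bigr)x_{u^*}-\alpha\,c_j\,x_{u^*},
\]
where $c_j=d_U(v_j)$ if $v_j\in W$ and $c_j=d_U(v_j)-1$ if $v_j\in U_+$. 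These are exactly the coefficients of $x_j$ in the two sums. When $c_j=0$ the inequality still holds trivially.

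Finally I combine this with $\lambda^2-\lambda>m-2$ and divide by $x_{u^*}>0$. Replacing $|U|+e(U,W)$ by $m-e(U)-e(W)$ gives
\[
m-2<m-e(W)+e(U)-|U_+|-\alpha c_j .
\]
Rearranging gives $e(W)<e(U)-|U_+|+2-\alpha c_j$, which is exactly the two claimed inequalities. The strictness comes from the strict hypothesis on $\lambda$.

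The only delicate point is positivity of the Perron vector, which is needed for $x_{u^*}>0$ and for the case split. I handle it by assuming $G$ connected, as in the setting where the lemma is applied. Otherwise I pass to the component carrying $\lambda$; the component has at most $m$ edges, so the hypothesis still gives $\lambda^2-\lambda>m-2$, and the inequality only improves. The rest is the bookkeeping of the coefficient $c_j$ in the two cases, which the displayed identity makes transparent.
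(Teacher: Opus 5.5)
Your proof is correct and follows the standard route. The paper only cites this lemma from \cite{L21} without proof, but its own derivations of \eqref{555} and of \eqref{W} in Lemma~\ref{C2} are exactly your computation from \eqref{E3}; you additionally keep the factor $(1-\alpha)$ on the single coefficient multiplying $x_j$, and your connectivity caveat is harmless but unnecessary, since the argument only uses $0\le x_v\le x_{u^*}$ and $x_{u^*}>0$, which hold for any nonnegative unit eigenvector.
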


\section{\noindent \textbf{Proof of the main theorem}}

In view of Lemma \ref{lem20}, it suffices to prove that $\lambda(G) \le \frac{1 + \sqrt{4m - 7}}{2}$ when $G \in \mathcal{G}(m,H(4,3))\setminus\left\{ K_2 \vee \frac{m-1}{2}K_1,\;
K_1 \vee \left( K_{1,\frac{m-3}{2}} \cup 2K_1 \right) \right\}.$\\
\indent Choose $\hat{G} \in \mathcal{G}(m,H(4,3))\setminus\left\{ K_2 \vee \frac{m-1}{2}K_1 \right\}$ with odd size $m \ge 58$ such that the spectral radius of $\hat{G}$ is as large as possible. We denote $\lambda := \lambda(\hat{G})$ for short. Assume that $x = (x_1,x_2,\ldots,x_n)^T$ is the Perron vector of $\hat{G}$ with $x_{u^*} = \max\{x_u \mid u \in V(\hat{G})\}.$ Define $U=N_{\hat G} (u^*)$ and $ W = V(\hat G) \setminus(U \cup \{u^*\})$. Moreover we partition $U$ into $U_0$ and $U_+$, where $U_0=\{ u\in U | d_U(u)=0\}$ and $U_+=U\setminus U_0.$ It is easy to see that
\begin{equation}\label{310}
     m=|U|+e(U_+)+e(U,W)+e(W)
\end{equation}
Also, we have
\begin{equation}\label{311}
    \lambda x_{u^*}=\sum_{u\in U}x_u=\sum_{u\in U_+}x_u + \sum_{v\in U_0}x_v,
    \end{equation}
    and
    \begin{equation}\label{312}
        \lambda^2 x_{u^*}=|U|x_{u^\ast}+\sum_{u\in U_+}d_U(u)x_u + \sum_{w\in W}d_U(w)x_w.
    \end{equation}
    Moreover, from  (\ref{311}) and (\ref{312}), we obtain
\begin {align}\label{E3}
({\lambda}^2-\lambda) x_{u^\ast}=|U| x_{u^\ast}+\sum_{v\in U_+}(d_U(v)-1)x_v+\sum_{w\in W}d_U(w)x_w-\sum_{v\in U_0}x_v.
\end{align}

On the contrary, we suppose that $\lambda >\frac{1 + \sqrt{4m - 7}}{2}$
and  $\hat{G} \not\cong K_1 \vee \left( K_{1,\frac{m-3}{2}} \cup 2K_1 \right).$ Then, \({\lambda}^2 - \lambda \ge m-2\). By substituting \eqref{310}
in (\ref{E3}), we obtain
\begin{equation}\label{555}
e(W) < e(U_+) - |U_+| - \sum_{v \in U_0} \frac{x_v}{x_{u^\ast}} + 2.
\end{equation}

We first establish a sequence of lemmas, each of which plays a crucial role in characterizing the structure of the extremal graph.

\begin{lemma}\label{C1}
$\hat{G}[U_{+}]$ contains at least $7$ edges.
\end{lemma}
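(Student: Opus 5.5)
We argue by contradiction and assume $e(U_+)\le 6$. Then $e(W)\ge 0$ together with \eqref{555} gives a lower bound on $e(U_+)-|U_+|$. The plan is to show this forces $\lambda$ to be too small.

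\textbf{Step 1: the sum over $U_0$.} Since $e(U_+)\le 6$ and every vertex of $U_+$ has a neighbour in $U_+$, we have $|U_+|\le 2e(U_+)\le 12$. From \eqref{555} we get
\[
\sum_{v\in U_0}\frac{x_v}{x_{u^*}}< e(U_+)-|U_+|+2 .
\]
Because $\hat G[U_+]$ has at most $6$ edges on $|U_+|$ vertices, $e(U_+)-|U_+|$ is bounded by a small constant; for example, $K_4$ gives $6-4=2$. Hence $\sum_{v\in U_0}x_v< c\,x_{u^*}$ for an absolute constant $c\le 4$.

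\textbf{Step 2: bound $\lambda$ through \eqref{311}.} We have $\lambda x_{u^*}=\sum_{U_+}x_u+\sum_{U_0}x_v\le |U_+|x_{u^*}+c\,x_{u^*}$. This gives $\lambda\le 12+4=16$. On the other hand, $m\ge 58$ gives
\[
\lambda>\frac{1+\sqrt{4m-7}}{2}\ge \frac{1+\sqrt{225}}{2}=8 .
\]
So this crude bound is not yet a contradiction, and we need to be more careful.

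\textbf{Step 3: sharpen by exploiting the tradeoff.} We combine $|U_+|\le 2e(U_+)$ with the inequality from Step 1. Then
\[
\lambda\le |U_+|+e(U_+)-|U_+|+2=e(U_+)+2\le 8,
\]
where we use $x_u\le x_{u^*}$ on $U_+$. This contradicts $\lambda>8$. The inequality is strict already, because \eqref{555} is strict.

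\textbf{Main obstacle.} The key point is to add the two bounds rather than treat them separately. The term $|U_+|$ from \eqref{311} cancels against $-|U_+|$ in \eqref{555}. One must check that $\lambda>8$ is indeed guaranteed at $m=58$: we have $4\cdot58-7=225$, so $\frac{1+15}{2}=8$ and the inequality is strict. Thus any $m\ge 58$ suffices.

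If \eqref{555} requires $e(W)\ge0$ to give a lower bound, note that $e(W)$ appears on the left side. So dropping it, since $e(W)\ge 0$, yields exactly the inequality used in Step 1.
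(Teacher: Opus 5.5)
Your proof is correct. Steps 1 and 2 are not needed: the whole argument is Step 3, where inserting \eqref{555} (with $e(W)\ge 0$) into \eqref{311} cancels the $|U_+|$ terms. This gives $\lambda<e(U_+)+2\le 8$, contradicting $\lambda>\frac{1+\sqrt{4m-7}}{2}\ge 8$.

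The paper takes a slightly different path. It bounds \eqref{312} directly, using \eqref{310} and $x_v\le x_{u^*}$, to get the unconditional estimate $\lambda^2\le m-e(W)+e(U_+)\le m+6$. It then checks that $\sqrt{m+6}\le\frac{1+\sqrt{4m-7}}{2}$ for $m\ge 58$.

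The two routes are algebraically equivalent, because \eqref{E3}, from which \eqref{555} is derived, is \eqref{312} minus \eqref{311}. Adding your two estimates reproduces the paper's bound: the one behind \eqref{555} for $(\lambda^2-\lambda)x_{u^*}$, and the one from \eqref{311} for $\lambda x_{u^*}$. Conversely, the paper's bound together with $\lambda^2>\lambda+m-2$ gives exactly your inequality $\lambda<e(U_+)+2-e(W)$. This is why both arguments hit the same threshold $m\ge 58$.

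Your packaging yields the $m$-free statement $e(U_+)>\lambda-2$. The paper's yields a bound on $\lambda$ that uses the contrary hypothesis only in the final comparison.
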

\noindent \textbf{Proof.} Assume, to the contrary, that $e(U_{+}) \le 6$.
Then, by \eqref{310} and \eqref{312}, we obtain
\begin{align*}
\lambda^{2} x_{u^{*}}
  &= |U|x_{u^{*}}+ \sum_{u \in U_{+}} d_{U}(u)x_{u}+ \sum_{w \in W} d_{U}(w)x_{w}\\
  &\leq \bigl(|U| + 2e(U_{+}) + e(U,W)\bigr)x_{u^{*}}\\
  &= \bigl(m - e(W) + e(U_{+})\bigr)x_{u^{*}}\\
  &\leq (m + 6)x_{u^{*}}.
\end{align*}
Therefore, we can find that $\lambda \le \sqrt{m + 6} \leq \frac{1 +{\sqrt{4m-7}}}{2}$
 for any $m \ge 58,$ a contradiction. \qed

\begin{lemma}\label{C2}
$\hat{G}[U_+]$ is isomorphic to a star $K_{1,r}$ with $r\geq7.$ That is, $\hat{G}[U] = K_{1,r} \cup (d(u^*)-r-1)K_1$ for some $r \geqslant 7.$ Moreover $e(W)=0.$
\end{lemma}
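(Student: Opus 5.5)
Since $U=N_{\hat G}(u^*)$ and $U_0$ consists of isolated vertices of $\hat G[U]$, every edge of $\hat G[U]$ lies inside $U_+$. By Lemma \ref{C1}, $\hat G[N_{\hat G}(u^*)]$ therefore has at least $7$ edges. Lemma \ref{lem24} then says that every non-trivial component of $\hat G[U]$, that is, every component of $\hat G[U_+]$, is a star. So I would write $\hat G[U_+]=K_{1,r_1}\cup\cdots\cup K_{1,r_t}$ with $r_i\ge 1$. This gives $|U_+|=\sum_i (r_i+1)$ and $e(U_+)=\sum_i r_i$, hence
\[
e(U_+)-|U_+| = -t .
\]
Substituting this into \eqref{555} yields
\[
e(W) < 2 - t - \sum_{v\in U_0}\frac{x_v}{x_{u^*}} .
\]
Since $e(W)\ge 0$ and $t\ge 1$ (there is at least one edge), this forces $t=1$. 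The inequality then reads $e(W)<1-\sum_{v\in U_0}x_v/x_{u^*}\le 1$, so $e(W)=0$.

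With $t=1$, $\hat G[U_+]$ is a single star $K_{1,r}$. Its edge count equals $e(U_+)\ge 7$ by Lemma \ref{C1}, so $r\ge 7$. The remaining $d(u^*)-r-1$ vertices of $U$ lie in $U_0$ and are isolated in $\hat G[U]$, which gives $\hat G[U]=K_{1,r}\cup (d(u^*)-r-1)K_1$.

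The one point needing care is that \eqref{555} is a strict inequality derived under the standing assumption $\lambda>\frac{1+\sqrt{4m-7}}{2}$. This gives $\lambda^2-\lambda> m-2$; the paper writes $\ge m-2$, but because $e(W)$ and $t$ are integers, the strict form is exactly what is needed. I would double-check that the derivation of \eqref{555} from \eqref{E3} and \eqref{310} uses $d_U(w)\le$ its contribution to $e(U,W)$ and $x_w\le x_{u^*}$ correctly. Otherwise the argument is direct, and I expect no real obstacle beyond verifying that Lemma \ref{lem24} applies to $G[N_G(u^*)]$ rather than to $U_+$ alone, which the inclusion of edges noted above handles.
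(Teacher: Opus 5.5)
Your proof is correct and is essentially the paper's argument: Lemma \ref{lem24} makes $\hat G[U_+]$ a union of stars with $e(U_+)-|U_+|=-t$, two or more components give $e(W)<0$, and one component gives $e(W)<1$, hence $e(W)=0$. The only difference is that the paper re-derives the weaker inequality $e(W)<e(U_+)-|U_+|+2$ (dropping the $U_0$ term) instead of citing \eqref{555}; your point that the strict inequality $\lambda^2-\lambda>m-2$, which follows from $\lambda>\frac{1+\sqrt{4m-7}}{2}$, is needed to exclude $t=2$ is correct.
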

\noindent \textbf{Proof.} Under the assumption that $\lambda > \frac{1+\sqrt{4m-7}}{2}$, we immediately obtain $\lambda^{2}-\lambda > m-2.$ Combining this with \eqref{310},~\eqref{311} and \eqref{312}, it follows that
\begin{align*}
    \left(m-2\right)x_{u^{*}} &< (\lambda^{2}-\lambda)x_{u^{*}}\\
 &= |U|x_{u^{*}} + \sum_{u\in U^{+}}(d_{U}(u)-1)x_{u}- \sum_{v\in U_{0}} x_{v}
+ \sum_{w\in W} d_{U}(w)x_{w}\\
&\le \bigl(|U| + 2e(U_{+}) - |U_{+}| + e(U,W)\bigr)x_{u^{*}}\\
&= \bigl(m - e(W) + e(U_{+}) - |U_{+}|\bigr)x_{u^{*}}.
\end{align*}
\noindent Consequently, we obtain
\begin{align}\label{W}
e(W) < e(U_{+}) - |U_{+}| + 2.
\end{align}

\noindent By Lemma \ref{lem24}, each component of $\hat G[U_{+}]$ is isomorphic
to a star $K_{1,r}$ with $r\geq1$. Now, if there is more than one star component in $\hat G[U_{+}]$, then $e(U_{+}) - |U_{+}| \le -2.$ This together with \eqref{W} yeilds
\begin{align*}
e(W) < e(U_{+}) - |U_{+}| + 2 \le -2 + 2 = 0,
\end{align*}
which is a contradiction.
This with Lemma \ref{C1} shows that $\hat G[U_{+}]$ consists of exactly one star $K_{1,r}$, where $r \ge 7$.\\
\indent It is clear that $e(U_{+}) - |U_{+}| = -1$.
In view of (\ref{W}), we get
\begin{align*}
e(W) < e(U_{+}) - |U_{+}| + 2= -1 + 2 = 1.
\end{align*}
This implies that $e(W)=0.$
\qed
\begin{lemma}\label{C3}
$d(w)\geq 2$ for each $w\in W.$
\end{lemma}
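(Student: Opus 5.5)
We argue by contradiction. Suppose some $w_0\in W$ has $d(w_0)\le 1$. Since $e(W)=0$ by Lemma \ref{C2}, every neighbour of a vertex of $W$ lies in $U$. Since $\hat G$ has no isolated vertices, $d(w_0)=d_U(w_0)=1$; let $v$ be its unique neighbour, $v\in U$. The idea is to relocate the edge $w_0v$ to obtain a graph with the same number of edges, still in the admissible family, and with strictly larger spectral radius. This contradicts the maximality of $\hat G$. We may assume $\hat G$ is connected: otherwise we replace it by the component containing $u^*$, and since $\lambda>\frac{1+\sqrt{4m-7}}{2}$, the other components contribute too little to matter.

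The natural relocation is $G^*=\hat G - w_0v + w_0u^*$. This moves the pendant vertex $w_0$ from $v$ to $u^*$. Since $x_{u^*}\ge x_v$ and $w_0\notin N(u^*)$, Lemma \ref{lem25} (with the roles $u=u^*$, $v=v$, $s=1$, $v_1=w_0$) gives $\lambda(G^*)>\lambda(\hat G)$. Also $G^*$ has $m$ edges and no isolated vertices. In $G^*$ the vertex $w_0$ becomes a member of $U_0$, since it is a pendant vertex attached to $u^*$.

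Two things must then be checked.

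First, $G^*$ is $H(4,3)$-free. The only new edge is $u^*w_0$, with $w_0$ pendant, so any copy of $H(4,3)$ using it has $w_0$ as a vertex of degree $1$. But $H(4,3)$ has minimum degree $2$, so no such copy exists.

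Second, $G^*\not\cong K_2\vee\frac{m-1}{2}K_1$. This holds because $G^*$ has a pendant vertex $w_0$, while the book graph has minimum degree $2$ once $m\ge 3$.

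Hence $G^*\in\mathcal G(m,H(4,3))\setminus\{K_2\vee\frac{m-1}{2}K_1\}$ and $\lambda(G^*)>\lambda$, contradicting the choice of $\hat G$.

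The main obstacle is to ensure that Lemma \ref{lem25} applies and is strict. It needs $G$ connected, $u^*\ne v$, and $w_0\in N(v)\setminus N(u^*)$, all of which hold here. A subtle point is whether the relocation might produce exactly $K_1\vee(K_{1,\frac{m-3}{2}}\cup 2K_1)$. This causes no contradiction: that graph is still in the family, and strict growth of $\lambda$ already contradicts maximality. The extremality of $\hat G$ is only needed over the family excluding the book graph.

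As an alternative, we could use the counting inequality \eqref{555}. A pendant $w_0$ has $x_{w_0}=x_v/\lambda$, which is small. Applying Lemma \ref{lem27} with $\alpha$ close to $1-1/\lambda$ and $d_U(w_0)=1$ gives $e(W)<e(U)-|U_+|+2-\alpha=1-\alpha$. This alone yields no contradiction, so the edge-relocation argument above is the one I would follow.
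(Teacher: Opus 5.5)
Your proof is correct and follows the paper's argument: you relocate the pendant edge $w_0v$ to $w_0u^*$ and apply Lemma \ref{lem25} using $x_{u^*}\ge x_v$ (the paper writes $x_{u^*}\ge x_w$, a slip). You also make explicit two checks the paper leaves implicit: $G^*$ is $H(4,3)$-free because $H(4,3)$ has minimum degree $2$, and $G^*$ is not the book graph because it has a pendant vertex.

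One small point: your connectivity aside is loosely justified, since replacing $\hat G$ by a component changes $m$. It is unnecessary anyway, because $e(W)=0$ from Lemma \ref{C2} and the absence of isolated vertices already put every vertex within distance $2$ of $u^*$.
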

\noindent \textbf{Proof.} Suppose, to the contrary, that there exists a vertex $w\in W$ such that $d(w)=1$ and $wv\in E(G).$ Clearly, $v\neq u^*.$ Let $G^\prime=G-wv+wu^*.$ Since $x_{u^*}=\max\{x_u:u\in V(G)\},$ we have $x_{u^*}\geq x_w.$ It is clear that $G^\prime$ is an $H(4,3)-$free graph with $m$ edges. By Lemma \ref{lem25}, we obtain $\lambda(G^\prime) > \lambda(G),$ which yields a contradiction.
\qed
\indent The following useful observation will be required in the sequel.\\

\noindent\textbf{Observation 1.} $\displaystyle\sum_{v\in U_0} \frac{x_v}{x_{u^*}} < 1.$ That is, $\displaystyle\sum_{v\in U_0} x_v < x_{u^*}.$ \\
\noindent \textbf{Proof.} Recall from \eqref{555} that
 \begin{align*}
0\leq e(W) < e(U_+)-|U_+|-\sum_{v\in U_0}\frac{x_v}{x_{u^*}}+2.
\end{align*}
In view of Lemma \ref{C2}, this inequality yields
\begin{align*}
\displaystyle\sum_{v\in U_0}\frac{x_v}{x_{u^*}} < e(U_+)-|U_+|+2=-1+2=1.
\end{align*}
In particular, \(
\sum_{v \in U_0} x_v < x_{u^*}.
\)
\qed
\begin{lemma}\label{C4}
$ W=\emptyset$
\end{lemma}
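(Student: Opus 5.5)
The plan is to combine the $H(4,3)$-freeness with the structure found in Lemma \ref{C2} to force every vertex of $W$ to have degree exactly $2$, and then to get a contradiction from the weight inequality of Lemma \ref{lem27}.

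Suppose, for contradiction, that $W\neq\emptyset$. By Lemma \ref{C2}, $\hat G[U]=K_{1,r}\cup (d(u^*)-r-1)K_1$ with $r\ge 7$. Let $u_0$ be the centre of the star and $L$ its set of $r$ leaves. Again by Lemma \ref{C2} we have $e(W)=0$, so every $w\in W$ has all its neighbours in $U$. By Lemma \ref{C3}, $d(w)=d_U(w)\ge 2$ for every $w\in W$.

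\textbf{Step 1: every $w\in W$ is adjacent to $u_0$.} Suppose $w$ has two neighbours $a,b\in U\setminus\{u_0\}$. Then $u^*awbu^*$ is a $4$-cycle through $u^*$. Since $r\ge 7$, we can choose a leaf $\ell\in L\setminus\{a,b\}$. Then $u^*u_0\ell$ is a triangle that meets the $4$-cycle only in $u^*$, which gives a copy of $H(4,3)$, a contradiction. So any two neighbours of $w$ include $u_0$. Since $d(w)\ge 2$, this forces $u_0\in N(w)$.

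\textbf{Step 2: $d(w)=2$.} If $d(w)\ge 3$, then $w$ has two neighbours different from $u_0$, which Step 1 rules out. Hence $N(w)=\{u_0,b_w\}$ for some $b_w\in U\setminus\{u_0\}$, and $d_U(w)=2$.

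\textbf{Step 3: the weight contradiction.} By \eqref{111} and the maximality of $x_{u^*}$,
\[
x_w=\frac{x_{u_0}+x_{b_w}}{\lambda}\le \frac{2}{\lambda}\,x_{u^*}<\frac{2+\varepsilon}{\lambda}\,x_{u^*}
\]
for any small $\varepsilon>0$. Set $\alpha=1-\frac{2+\varepsilon}{\lambda}\in(0,1)$; this is admissible since $\lambda>\frac{1+\sqrt{4m-7}}{2}>4$ for $m\ge 58$. Apply Lemma \ref{lem27} with $v_j=w\in W$. By Lemma \ref{C2} we have $e(U)-|U_+|=e(U_+)-|U_+|=-1$ and $e(W)=0$, so
\[
0=e(W)<-1+2-2\alpha=-1+\frac{2(2+\varepsilon)}{\lambda}.
\]
For $\varepsilon$ small and $\lambda>4$ the right-hand side is negative, which is a contradiction. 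Therefore $W=\emptyset$.

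The main obstacle is Step 1, that is, finding a copy of $H(4,3)$ whenever $w$ avoids $u_0$. This step is essential. Without it, Lemma \ref{lem27} alone only gives a bound involving $d_U(w)^2/\lambda$, and that bound fails for large $d_U(w)$. The argument in Step 1 relies on $r\ge 7$ so that a leaf outside $\{a,b\}$ exists. It also needs the vertices of the triangle and the $4$-cycle to be distinct apart from $u^*$, which should be checked carefully, including the case where $a$ or $b$ lies in $U_0$.
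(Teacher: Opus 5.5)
Your proof is correct, and it takes a genuinely different and much shorter route than the paper's. Below, $v_0$ denotes the star centre (your $u_0$).

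\textbf{How the paper proceeds.} The paper splits on whether $N_W(v_0)=\emptyset$.
\begin{itemize}
\item In Case~1 it uses $H(4,3)$-freeness only to get $N(w_i)\subseteq\{v_0\}\cup U_0$. It then bounds $x_{w_i}<x_{v_0}$ via Observation~1 and relocates an edge $w_1u_1$ to $v_0u_1$ using Lemma~\ref{lem25}.
\item In Case~2 it runs a long spectral analysis. First it relocates edges to clear $U_0$ from $N(w)$. Then it does a global rewiring with new vertices joined to $u^*$ and $v_0$, compared through $\mathbf{y}^T(A(G_4)-A(\hat G))\mathbf{z}$. When $U_0=\emptyset$, it uses Lemmas~\ref{lem26} and~\ref{lem27}, a quotient-matrix computation, and a second-order estimate to handle large $d_U(w)$.
\end{itemize}

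\textbf{What your argument does instead.} Your Step~1 shows that Case~2 is in fact vacuous. Once $d(w)\ge 2$ (Lemma~\ref{C3}) and $w\not\sim u_0$, the $4$-cycle $u^*awbu^*$ and the triangle $u^*u_0\ell$ already form a copy of $H(4,3)$. Your Step~2 then pins $d_U(w)=2$. That is exactly what makes $x_w\le\frac{2}{\lambda}x_{u^*}$ strong enough for a single application of Lemma~\ref{lem27}, so it replaces the Case~1 relocation as well.

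\textbf{What each route buys.} Yours gains brevity and has fewer points of failure. It needs no Observation~1, no Lemma~\ref{lem26}, and no equitable partitions. It also avoids having to check that the rewired graphs $G_1,\dots,G_6$ stay in the admissible family. The paper's relocation technique is a more generic spectral tool that does not require exhibiting an explicit copy of $H(4,3)$, but here it gains nothing.

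\textbf{Two small points.} The distinctness check you flag at the end is immediate: $u_0\notin\{a,b\}$ by hypothesis, $\ell\notin\{a,b\}$ by choice, and $w\notin U$. This holds whether or not $a$ or $b$ lies in $U_0$. Likewise, the $\varepsilon$ is needed only because Lemma~\ref{lem27} is stated with a strict hypothesis.
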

\noindent \textbf{Proof.} To the contrary, assume that $W \neq \emptyset$. By Lemma \ref{C1}, we have $\hat{G}[U] \cong K_{1,r} \cup (d(u^*)-r-1)K_1$ for some $r\geq 7$. Moreover, $e(W)=0$ and  $\sum_{v\in U_0}x_v < x_{u^*}.$ Let  $V(U_+)=\{v_0,v_1,\dots,v_r\}$ denote the vertex set of the star $K_{1,r},$ where $v_0$ is the center. Since $r\geq 7$, we have the following cases to consider.\\
\noindent \textbf{Case 1.} $N_{W}(v_0) \neq \emptyset$. In this case, we may assume $N_{W}(v_0)=\{w_1,w_2,\ldots,w_s\}.$ Since $\hat{G}$ is $H(4,3)$-free and $e(W)=0$, we have
$N(w_i)\subseteq \{v_0\}\cup U_0 \quad \text{for } i\in\{1,2,\ldots,s\}.$\\
By \eqref{111}, we get\\
\begin{align*}
x_{v_0}\le \lambda x_{w_i}\le x_{v_0}+\sum_{v\in U_0} x_v < x_{v_0}+x_{u^*}
\quad \text{for } i\in\{1,2,\ldots,s\}.
\end{align*}
Equivalently,
\begin{align*}
\frac{1}{\lambda}x_{v_0}\le x_{w_i}<\frac{1}{\lambda}(x_{v_0}+x_{u^*})\le \frac{2}{\lambda}x_{u^*}\quad \text{for } i\in\{1,2,\ldots,s\}.
\end{align*}
\noindent Again, by \eqref{111}, we have

\begin{align*}
\lambda x_{u^*} = x_{v_0}+\sum_{i=1}^{r} x_{v_i}+\sum_{v\in U_0} x_v
< x_{v_0}+\sum_{i=1}^{r} x_{v_i}+x_{u^*}
\end{align*}
and
\begin{align*}
\lambda x_{v_0} = x_{u^*}+\sum_{i=1}^{r} x_{v_i}+\sum_{i=1}^{s} x_{w_i}.
\end{align*}
Consequently,
 \begin{align*}
\lambda(x_{u^*}-x_{v_0}) < x_{v_0}-\sum_{i=1}^{s} x_{w_i}.
\end{align*}
Note that $x_{v_0}\le x_{u^*}$. So it follows that $\sum_{i=1}^{s} x_{w_i}<x_{v_0}.$
This implies that $x_{w_i}<x_{v_0}$ for $i\in\{1,2,\ldots,s\}$. By Lemma \ref{C3}, we have
$d(w_i)\ge 2 \quad \text{for } i\in\{1,2,\ldots,s\}.$ It is easy to see that there exists a vertex, say $u_i\in N(w_i)\cap U_0$ for $i\in\{1,2,\ldots,s\}.$ Construct a new graph
$G_1=\hat{G}-w_1u_1+v_0u_1.$ It is clear that $G_1\in \mathcal{G}(m,H(4,3))$. By Lemma \ref{lem25}, we get $\lambda(G_1)>\lambda(\hat{G})$, a contradiction.

\noindent \textbf{Case 2.} $N_{W}(v_{0})=\emptyset$. In this case, we get $N(w)\subseteq \{v_{1},v_{2},\dots,v_{r}\}\cup U_{0}$ for every $ w\in W.$ We consider the following two subcases.\\
\noindent \textbf{Subcase 2.1.} Let $U_{0}\neq \emptyset$ and write $U_{0}=\{u_{1},u_{2},\ldots,u_{|U_{0}|}\}$. We claim that
$N(w)\cap U_{0}=\emptyset$. Indeed, to the contrary, suppose that $wu_i\in E(\hat{G})$ for some
$w\in W$ and some $1\le i\le |U_{0}|$. Define
\[
G_{2}=\hat{G}-wu_i+v_{0}u_i.
\]
Clearly, $G_{2}\in \mathcal{G}(m,H(4,3))$. Since $w\in W$ and
$\sum_{v\in U_{0}} x_v < x_{u^*}$, from \eqref{111}, it follows that
\begin{align*}
\lambda x_{w}\le \sum_{i=1}^{r} x_{v_{i}}+\sum_{v\in U_{0}} x_{v}
< \sum_{i=1}^{r} x_{v_{i}}+x_{u^*}
= \lambda x_{v_{0}}.
\end{align*}
This yields $x_w < x_{v_0}.$ By Lemma \ref{lem25}, it follows that $\lambda(G_2) > \lambda(\hat{G}),$ a contradiction.\\
Thus, $N(w)\cap U_{0}=\emptyset$ for every $w\in W.$ Hence, $N(w)\subseteq \{v_{1},v_{2},\dots,v_{r}\}$ for every $w\in W.$ For convenience, we let $b=\sum_{w\in W} d(w).$
First, let $b$ be even. Then consider the graph $G_{3}$ obtained from $\hat{G}$ by adding $\frac{b}{2}$ isolated vertices $\{p_{1},p_{2},\ldots,p_{b/2}\},$ and construct a new graph
\begin{align*}
G_{4}= G_{3}- \sum_{w\in W}\sum_{v\in N(w)} wv
+ \sum_{i=1}^{b/2} p_i u^*+ \sum_{i=1}^{b/2} p_i v_{0}.
\end{align*}

Clearly, $G_{4}\in \mathcal{G}(m,H(4,3))$. Let $\mathbf{y}$ (resp.\ $\mathbf{z}$) denote
the Perron vector of $G_{3}$ (resp.\ $G_{4}$), where $y_v$ (resp.\ $z_v$) corresponds
to the vertex $v\in V(G_{3})$ (resp.\ $v\in V(G_{4})$). Evidently,
$y_{p_i}=0$ for all $i\in\{1,2,\ldots,b/2\}$, and $y_v=x_v$ for every other vertex in
$V(G_{3})$. Moreover, it is straightforward to verify that
$y_w < y_{v_0} < y_{u^*}$ for each $w\in W$. Furthermore, we have
\[
z_{v_1}=z_{v_2}=\dots=z_{v_r}
= z_{p_1}=z_{p_2}=\dots=z_{p_{b/2}},
\]
and $z_w=0$ for every $w\in W$. Then we obtain
\begin{align*}
    (\lambda(G_{4})-\lambda(\hat{G}))\,\mathbf{y}^{T}\mathbf{z}
  &= \mathbf{y}^{T}\lambda(G_{4})\mathbf{z}
  - \lambda(\hat{G})\,\mathbf{y}^{T}\mathbf{z} \\
  &= \mathbf{y}^{T}A(G_{4})\mathbf{z}
  - \mathbf{y}^{T}A(\hat{G})\mathbf{z} \\
  &= \mathbf{y}^{T}\bigl(A(G_{4})-A(\hat{G})\bigr)\mathbf{z}\\
  &= \sum_{i=1}^{b/2} (y_{u^*} z_{p_i} + z_{u^*} y_{p_i})
   +\sum_{i=1}^{b/2} (y_{v_0} z_{p_i} + z_{v_0} y_{p_i})-\sum_{w\in W}\sum_{v\in N(w)}(y_wz_v+z_wy_v)\\
  &= \sum_{i=1}^{b/2} y_{u^*} z_{p_i}
   + \sum_{i=1}^{b/2} y_{v_0} z_{p_i}
   - \sum_{w\in W} \sum_{v\in N(w)} y_w z_v \\
  &> \frac{b}{2} y_{v_0} z_{v_1}
   + \frac{b}{2} y_{v_0} z_{v_1}
   - \sum_{w\in W} y_{v_0} d(w) z_{v_1} \\
   &= \left( \frac{b}{2} + \frac{b}{2} - \sum_{w\in W} d(w) \right)
   y_{v_0} z_{v_1} \\
  &= 0 .
\end{align*}
Hence, $\lambda(G_{4}) > \lambda(\hat{G})$, a contradiction.\\
\indent Now, let $b$ be odd. Recall that $U_{0}\neq \emptyset $ and $u_{1}\in U_{0}$.
Let $G_{5}$ be the graph obtained from $\hat{G}$ by adding $\frac{b-1}{2}$ isolated vertices $\{p_{1},p_{2},\ldots,p_{(b-1)/2}\},$ and construct a new graph
$$G_{6}= G_{5}- \sum_{w\in W}\sum_{v\in N(w)} wv
+ \sum_{i=1}^{(b-1)/2} p_i u^\ast+ \sum_{i=1}^{(b-1)/2} p_i v_{0}
+ v_{0}u_{1}.$$
Clearly, $G_{6}$ is also a graph in $\mathcal{G}(m,H(4,3))$. Let $\bar{\mathbf{y}}$ (resp.\ $\bar{\mathbf{z}}$) denote the Perron vector of $G_{5}$ (resp.\ $G_{6}$), where $\bar{y}_v$ (resp.\ $\bar{z}_v$) corresponds to the vertex $v\in V(G_{5})$ (resp.\ $v\in V(G_{6})$). Obviously, $\bar{y}_{p_i}=0$ for $i\in\{1,2,\ldots,(b-1)/2\}$ and $\bar{y}_v=x_v$ for other vertices in $V(G_{5})$. Moreover, it is easy note that $\bar{y}_w < \bar{y}_{v_0} < \bar{y}_{u^\ast} $ for every $w\in W.$ Furthermore, we have \[\bar{z}_{v_1}=\bar{z}_{v_2}=\cdots=\bar{z}_{v_r}
= \bar{z}_{p_1}=\bar{z}_{p_2}=\cdots=\bar{z}_{p_{(b-1)/2}}
= \bar{z}_{u_1},\] and $\bar{z}_w=0 $ for every $w\in W.$ Then we obtain
\begin{align*}
(\lambda(G_{6})-\lambda(\hat{G}))\,\bar{\mathbf{y}}^{T}\bar{\mathbf{z}}
&= \bar{\mathbf{y}}^{T}\lambda(G_{6})\bar{\mathbf{z}}
  - \lambda(\hat{G})\,\bar{\mathbf{y}}^{T}\bar{\mathbf{z}} \\
&= \bar{\mathbf{y}}^{T}A(G_{6})\bar{\mathbf{z}}
  - \bar{\mathbf{y}}^{T}A(\hat{G})\bar{\mathbf{z}} \\
&= \bar{\mathbf{y}}^{T}\bigl(A(G_{6})-A(\hat{G})\bigr)\bar{\mathbf{z}} \\
&= \sum_{i=1}^{(b-1)/2}(\bar{y}_{u^*}\bar{z}_{p_i}+\bar{z}_{u^*}\bar{y}_{p_i})
 + \sum_{i=1}^{(b-1)/2}(\bar{y}_{v_0}\bar{z}_{p_i}+\bar{z}_{v_0}\bar{y}_{p_i}) \\
&\quad + (\bar{y}_{v_0}\bar{z}_{u_1}+\bar{z}_{v_0}\bar{y}_{u_1})
 - \sum_{w\in W}\sum_{v\in N(w)}(\bar{y}_w\bar{z}_v+\bar{z}_w\bar{y}_v) \\
&= \sum_{i=1}^{(b-1)/2}\bar{y}_{u^*}\bar{z}_{p_i}
 + \sum_{i=1}^{(b-1)/2}\bar{y}_{v_0}\bar{z}_{p_i}
 + \bar{y}_{v_0}\bar{z}_{u_1}
 + \bar{z}_{v_0}\bar{y}_{u_1}
 - \sum_{w\in W}\sum_{v\in N(w)}\bar{y}_w\bar{z}_v \\
&> \frac{b-1}{2}\bar{y}_{v_0}\bar{z}_{v_1}
 + \frac{b-1}{2}\bar{y}_{v_0}\bar{z}_{v_1}
 + \bar{y}_{v_0}\bar{z}_{v_1}
 - \sum_{w\in W}\bar{y}_{v_0}d(w)\bar{z}_{v_1} \\
&= \left( \frac{b-1}{2}+\frac{b-1}{2}+1-\sum_{w\in W}d(w) \right)
   \bar{y}_{v_0}\bar{z}_{v_1} \\
&= 0 .
\end{align*}
Hence, $\lambda(G_{6})>\lambda(\hat{G})$, a contradiction.\\
\noindent \textbf{Subcase 2.2.} Suppose that $U_{0}=\emptyset$. Since $N_{W}(v_{0})=\emptyset$, from
\eqref{111}, it follows that
\[
\lambda x_{u^*}=x_{v_{0}}+\sum_{i=1}^{r} x_i
\quad \text{and} \quad
\lambda x_{v_{0}}=x_{u^*}+\sum_{i=1}^{r} x_i .
\]
Subtracting these two equalities yields
\[
\lambda (x_{u^*}-x_{v_{0}})=x_{v_{0}}-x_{u^*},
\]
or equivalently,
\[
(\lambda-1)(x_{u^*}-x_{v_{0}})=0.
\]
Since $\lambda>8$, we conclude that $x_{u^*}=x_{v_{0}}$. As $W\neq \emptyset$,
let $w\in W$. Then

\begin{align*}
\lambda x_{w}= \sum_{i\in N(w)} x_{v_{i}}\le  \sum_{i=1}^{r} x_{v_{i}}
= \lambda x_{u^*}-x_{v_{0}}= (\lambda-1)x_{u^*}.
\end{align*}

\noindent This gives  $x_{w}\le \left(1-\frac{1}{\lambda}\right)x_{u^*}.$ Using Lemma \ref{lem26},  we obtain
\begin{align*}
e(W)< e(U) - |U_+| + 2- \frac{1}{\lambda}d_{U}(w)= 1 - \frac{1}{\lambda} d_{U}(w).
\end{align*}
If $d_{U}(w)\ge \lambda$, then $e(W) < 1 - \frac{1}{\lambda} d_{U}(w) \leq 0,$ a contradiction. Therefore, we must have $2 \le d_{U}(w) < \lambda$ for each $w\in W.$

\noindent \textbf{Subcase 2.2.1.} There exists a vertex $w\in W$ such that $2 \le d_{U}(w) < \frac{\lambda}{2}.$\\
\indent Since $\lambda x_{w} = \sum_{i\in N(w)} x_{v_{i}}\le \sum_{i\in N(w)} x_{u^*}= d_{U}(w)\, x_{u^*}< \frac{\lambda}{2} x_{u^*},$ we obtain $x_{w} < \frac{1}{2} x_{u^*}= \left(1-\frac{1}{2}\right)x_{u^*}.$ By Lemma \ref{lem27}, we have
\begin{align*}
e(W)< \bigl(e(U) - |U_+|\bigr)+ 2 - \frac{1}{2} d_{U}(w)= 1 - \frac{1}{2} d_{U}(w)\le 0,
\end{align*}
which is a contradiction.

\noindent \textbf{Subcase 2.2.2.} Let  $\frac{\lambda}{2} \le d_{U}(w) < \lambda,$ for every vertex $w$ in $W.$ For convenience, let $d_{U}(w)=s$. If $|W|=1$, then the vertex set of $\hat{G}$ has an equitable partition $\pi_{1} : V = V_{1} \cup V_{2} \cup V_{3} \cup V_{4}$ (see Fig. 1), and the corrosponding quotient matrix with respect to $\pi_{1}$ is

\[
A_{\pi_{1}}=
\begin{pmatrix}
1 & s & \frac{ m-3s-1}{2} & 0 \\
2 & 0 & 0 & 1\\
2 & 0 & 0 & 0\\
0 & s & 0 & 0
\end{pmatrix},
\]\\
where $2(r-s)=m-3s-1.$ By Lemma \ref{lem28},  $\lambda$ is the largest real root of the equation
$f(x)=0,$ where $f(x)= \det(xI_{4}-A_{\pi_{1}})= x^{4}-x^{3}+(1-m)x^{2}+s x-3s^{2}+ms-s.$ A direct calculation shows that $f(x)>0 $ for any $x> \frac{1+\sqrt{4m-7}}{2},$ which implies $\lambda \leq \frac{1+\sqrt{4m-7}}{2},$ a contradiction. This shows that $|W|\geq 2.$
Note that \[\lambda x_{u^*}=x_{v_{0}}+\sum_{i=1}^{r} x_{v_{i}}
\qquad\text{and}\qquad \lambda x_{v_{0}}=x_{u^*}+\sum_{i=1}^{r} x_{v_{i}}.\]
On subtracting the two equalities and noting that $\lambda>8$, we obtain $x_{u^*}=x_{v_{0}}$ and $\sum_{i=1}^{r} x_{v_{i}}=(\lambda-1)x_{u^*}.$ Moreover,
\begin{align*}
   \lambda^{2}x_{u^*}
&= \lambda x_{v_{0}} + \sum_{i=1}^{r} \lambda x_{v_{i}}
= x_{u^*} + \sum_{i=1}^{r} x_{v_{i}}+ \sum_{i=1}^{r}\left(
x_{u^*}+x_{v_{0}}+\sum_{w\in N(v_{i})\cap W} x_{w}\right)\\
&= (2r+1)x_{u^*}+ \sum_{i=1}^{r} x_{v_{i}}+ \sum_{w\in W} d_{U}(w)x_{w}.
\end{align*}
Observe that $\lambda x_{w}= \sum_{v\in N(w)} x_{v}\le \sum_{i=1}^{r} x_{v_{i}}, ~\text{that is} ~ x_{w}\le \frac{1}{\lambda}\sum_{i=1}^{r} x_{v_{i}}.$ Then from above it follows that
\begin{align*}
  \lambda^{2}x_{u^*}& \le (2r+1)x_{u^*}+ \left(1+\frac{1}{\lambda}\sum_{w\in W} d_{U}(w)\right)\sum_{i=1}^{r} x_{v_{i}}\\
  &= (2r+1)x_{u^*}+ \left(1+\frac{m-2r-1}{\lambda}\right)(\lambda-1)x_{u^*}\\
  &= m x_{u^*} + (\lambda-1)x_{u^*}- \frac{m-2r-1}{\lambda}x_{u^*}\\
  &\le m x_{u^*} + (\lambda-1)x_{u^*} - x_{u^*}.
\end{align*}
This shows that $\lambda^2 -\lambda-m+2 \leq 0 ,$ which gives $\lambda(G)\leq \frac{1+ \sqrt{4m-7}}{2},$ a contradiction.
\qed
\begin{figure}[ht]
\centering
\includegraphics[width=140mm]{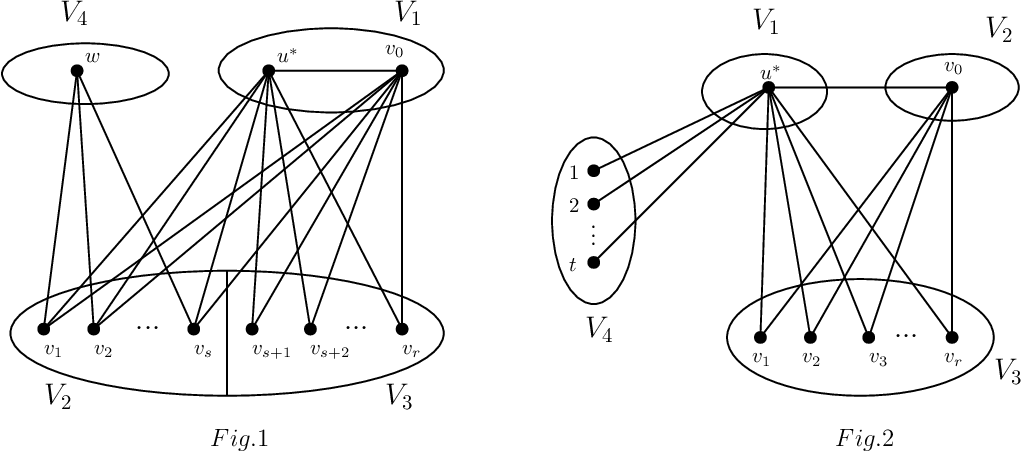}

\label{Bfig}
\end{figure}

\noindent \textbf{Proof of Theorem \ref{MT}} Let $|U_{0}| = t$. Observe that $\hat{G} \not\cong K_{1} \vee \bigl(K_{1,\frac{m-3}{2}} \cup 2K_{1}\bigr)$, where $m$ is odd. Consequently, $t$ is even and satisfies $t \ge 4$. Since $r \ge 7$, the vertex set of $\hat{G}$ admits an equitable partition \(\pi_{2} : V = V_{1} \cup V_{2} \cup V_{3} \cup V_{4}
\) (see Fig.~2), and the corresponding quotient matrix with respect to $\pi_{2}$ is

\[
A_{\pi_{2}}=
\begin{pmatrix}
0 & 1 & \frac{ m-t-1}{2} & t \\
1 & 0 & \frac{ m-t-1}{2} & 0\\
1 & 1 & 0 & 0\\
1 & 0 & 0 & 0
\end{pmatrix}.
\]\\
where, $2r=m-t-1.$ By Lemma \ref{lem28},  $\lambda$ is the largest root of the equation $f_{t}(x)=0,$ where $f_{t}(x)= \det(xI_{4}-A_{\pi_{2}})= x^{4}-mx^{2}-(m-t-1)x+\frac{t(m-t-1)}{2}.$ Since $f_t(x) - f_4(x)= (t-4)x + \frac{t(m-t-1)-4(m-5)}{2} >0$ for $x>0$ and $t\geq6$, it follows that $ t = |U_0| = 4$ for the extremal graph $\hat{G} $. Consequently, $\lambda$ is the largest root of $f_4(x) = x^4 - m x^2 - (m-5)x + 2(m-5).$\\
\indent Now, let $q(x) = x^2 - x - (m-2)$. Then $q\left(\frac{1 + \sqrt{4m-7}}{2}\right) = 0.$ A direct computation shows that $f_4(x) = (x^2 + x - 1) q(x) + 2x + (m-8).$ Clearly, $f_4(x) > 0 $ for $m\geq 58$ and $x > \frac{1 + \sqrt{4m-7}}{2}.$ This implies that $\lambda\leq\frac{1 + \sqrt{4m-7}}{2},$ a contradiction. This completes the proof.\qed

\section{Concluding remarks.}

In this paper, we established a sharp upper bound on the spectral radius of graphs over $ \mathcal{G}(m,H(4,3))\setminus\left\{K_2 \vee \frac{m-1}{2}K_1\right\} $ for odd $m\geq58$ and identified the unique extremal graph. This makes further progress towards the Brualdi-Hoffman-Tur$\acute{a}$n-type problem. Recently, Zhang, Wang \cite{L23} showed that $\lambda(G) \le \tilde{\lambda}(m)$ holds for all $H(4,3)-$free graphs of even size $m\geq51,$ where $\tilde{\lambda}(m)$ is the largest root of $x^4 - m x^2 - (m-2)x + \frac{m}{2}-1= 0$ and the equality holds if and only if $G\cong S^-_{\frac{m+4}{2},2},$ where $S^-_{\frac{m+4}{2},2}$ is the graph obtained from $K_2 \vee \frac{m}{2}K_1$ by deleting an edge incident to a vertex of degree two. So, it is natural to explore the upper bound on the spectral radius of graphs among $\mathcal{G}(m,H(4,3))\setminus S^-_{\frac{m+4}{2},2}$ for even size. Li, Lu and Peng \cite{LLP} focused the attention on  the spectral extremal problems for $F_2-$free graphs with given size as follows.
\begin{theorem}\label{333}\cite{LLP}
    If $m\ge 8$ and $G$ is an $F_2$-free graph with $m$ edges, then $\lambda(G)\leq \frac{1+\sqrt{4m-3}}{2},$ with equality if and only if $G\cong K_2\vee \frac{m-1}{2}K_1$.
\end{theorem}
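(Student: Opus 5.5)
Throughout, $F_2=H(3,3)$ denotes two triangles sharing exactly one vertex. I would use the same framework as Section~3, with the threshold $\frac{1+\sqrt{4m-3}}{2}$ in place of $\frac{1+\sqrt{4m-7}}{2}$. The plan is to suppose $\lambda(G)\ge\frac{1+\sqrt{4m-3}}{2}$, squeeze the local structure around a vertex of maximum Perron weight, and show that only the book graph survives. We may assume $G$ is connected, since the spectral radius is attained on a component with at most $m$ edges. Let $\mathbf{x}$ be the Perron vector, $u^*$ a vertex with $x_{u^*}$ maximal, $U=N(u^*)$, $W=V(G)\setminus(U\cup\{u^*\})$, and split $U=U_0\cup U_+$ as before.

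\textbf{Step 1 (counting inequality).} Identity \eqref{E3} holds for any graph. The assumption gives $\lambda^2-\lambda\ge m-1$, and $m=|U|+e(U_+)+e(U,W)+e(W)$. Bounding every $x_v$ on the right of \eqref{E3} by $x_{u^*}$, exactly as in \eqref{555}, yields
\[
e(W)\le e(U_+)-|U_+|+1-\sum_{v\in U_0}\frac{x_v}{x_{u^*}}.
\]
Equality here forces $x_v=x_{u^*}$ for every $v$ with $d_U(v)\ge 2$ and for every $w\in W$ with $d_U(w)\ge1$.

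\textbf{Step 2 (structure of $G[U]$).} This step uses $F_2$-freeness directly. Two vertex-disjoint edges of $G[U]$ together with $u^*$ form a copy of $F_2$, so $G[U_+]$ has matching number at most $1$. Hence $G[U_+]$ is empty, a star $K_{1,r}$, or a triangle.
\begin{itemize}
\item If $G[U_+]$ is empty, then $\lambda^2x_{u^*}\le (m-e(W))x_{u^*}$, so $\lambda\le\sqrt m<\frac{1+\sqrt{4m-3}}{2}$ for $m\ge 2$.
\item If $G[U_+]$ is a triangle, then $e(U_+)-|U_+|=0$. The inequality of Step 1 allows only $e(W)\le 1$ and forces $U_0$ to be essentially negligible. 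Here I would argue directly: $u^*$ together with the triangle is a $K_4$-like core, and the remaining $m-6$ edges are attached to it in a constrained way. An equitable-partition computation (Lemma \ref{lem28}), or the edge-shifting Lemma \ref{lem25}, should show that $\lambda$ falls below the bound once $m\ge 8$. This is where the hypothesis $m\ge8$ enters, since $K_4$-type graphs win for small $m$.
\item If $G[U_+]\cong K_{1,r}$, then $e(U_+)-|U_+|=-1$. So $e(W)\le -\sum_{v\in U_0}x_v/x_{u^*}$, which forces $e(W)=0$ and $U_0=\emptyset$.
\end{itemize}

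\textbf{Step 3 (eliminating $W$ in the star case).} In the star case, equality must hold throughout Step 1. In particular every $w\in W$ with a neighbour in $U$ has $x_w=x_{u^*}$. Because $e(W)=0$ and $G$ is connected, every $w\in W$ has $d_U(w)\ge1$, so every $w\in W$ has weight $x_{u^*}$. Then $\lambda x_w=\sum_{v\in N(w)}x_v$, together with $N(w)\subseteq U$, gives $d(w)\ge\lambda$. I would derive a contradiction in one of two ways. The first is the relocation argument of Lemma \ref{C4}: such a $w$ cannot be adjacent to the centre $v_0$ without creating $F_2$ at $v_0$. The second is a direct count showing that too many edges would be needed. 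Either way, $W=\emptyset$.

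\textbf{Step 4 (conclusion and equality).} Now $V(G)=\{u^*\}\cup U$ and $G[U]=K_{1,r}$, so $G=K_1\vee K_{1,r}=K_2\vee rK_1$. This has $m=2r+1$ edges, so $m$ is odd, and its spectral radius equals $\frac{1+\sqrt{4m-3}}{2}$ exactly. This gives both the bound and the uniqueness of the equality case. It also shows the bound is strict when $m$ is even.

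I expect the main obstacle to be the triangle case of Step 2. There the slack in the counting inequality is only $1$, so the vertices of $W$ and $U_0$ must be controlled by a finer weight estimate, in the spirit of Lemmas \ref{lem26} and \ref{lem27}, rather than by counting alone.
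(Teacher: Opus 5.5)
The paper only quotes this theorem in its concluding remarks and gives no proof of it, so your argument has to stand on its own. Steps~1, 2 (empty and star cases) and~4 are correct. Step~3 needs one more line, not a relocation argument or a count. From $x_w=x_{u^*}$, $N(w)\subseteq U$ and $\lambda x_{u^*}=\sum_{v\in U}x_v$ you get $N(w)=U$. Since $r+1=d(u^*)\ge\lambda>3$, the triangles $wv_0v_1$ and $u^*v_0v_2$ then form an $F_2$.

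The genuine gap is the triangle case of Step~2, which you leave at ``should show''. This is precisely where $m\ge 8$ is used, and the bound is nearly tight there. For $m=8$ the $F_2$-free graph $K_1\vee(K_3\cup 2K_1)$ has $\lambda\approx 3.177$ against $\frac{1+\sqrt{29}}{2}\approx 3.193$; for $m=6,7$ the graphs $K_4$ and $K_4$ plus a pendant edge violate the bound. The tools you name do not do the job:
\begin{itemize}
\item $W$ is arbitrary, so there is no fixed quotient matrix for Lemma~\ref{lem28}.
\item Lemma~\ref{lem25} only gives a contradiction if $G$ was chosen with maximum spectral radius, which your setup does not do.
\item Lemmas~\ref{lem26} and~\ref{lem27} have slack $\frac32$ and $2$, weaker than the slack $1$ you already have.
\end{itemize}

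One way to close the case using eigen-equations alone is as follows. Let $G[U]=K_3\cup U_0$ with $K_3$ on $\{a,b,c\}$, and put $y_v=x_v/x_{u^*}$, $s=\sum_{v\in U_0}y_v$. Since $3<\lambda\le d(u^*)$, we have $U_0\neq\emptyset$, so Step~1 forces $e(W)=0$. Each $w\in W$ has at most one neighbour in $\{a,b,c\}$, since otherwise, e.g., $wab$ and $acu^*$ form an $F_2$. Summing the eigen-equations at $a,b,c$ gives $(\lambda-2)(y_a+y_b+y_c)=3+s_W$, where $s_W=\sum_{w\in W}d_{\{a,b,c\}}(w)y_w$. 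Hence $s=\lambda-(y_a+y_b+y_c)=\frac{(\lambda-3)(\lambda+1)-s_W}{\lambda-2}$. Next, $\lambda^2=|U|+2(\lambda-s)+\sum_{w\in W}d_U(w)y_w$ and $m=|U|+3+\sum_{w\in W}d_U(w)$. So $\lambda^2-\lambda\ge m-1$ is equivalent to $\lambda-2\ge 2s+\sum_{w\in W}d_U(w)(1-y_w)$; in particular $s\le\frac{\lambda-2}{2}$. Substituting the expression for $s$ gives
\[
\lambda^2\le 10+\sum_{w\in W}\Bigl(2\,d_{\{a,b,c\}}(w)\,y_w-(\lambda-2)\,d_U(w)(1-y_w)\Bigr).
\]
If $w$ is adjacent to $a$, then $N(w)\subseteq\{a\}\cup U_0$. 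So $\lambda y_w\le 1$ when $d_U(w)=1$, and $y_w\le\frac{1+s}{\lambda}\le\frac12$ when $d_U(w)\ge 2$. In both cases the summand is at most $3-\lambda<0$, and all other summands are $\le 0$. Hence $\lambda^2\le 10$, contradicting $\lambda\ge\frac{1+\sqrt{29}}{2}>\sqrt{10}$.

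Relatedly, when you pass to a component, keep the threshold with the original $m\ge 8$. The component may have fewer than $8$ edges (e.g.\ $K_4$), and the missing edges only add slack to Step~1.
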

It is important to note that in Theorem \ref{333}, the upper bound is sharp only for odd $m.$ In this connection, Chen, Li,  Zhang, Zhang \cite{L21} obtained a sharp upper bound on the spectral radius of graphs in $\mathcal{G}(m,F_2)$ for even $m\geq 16$ and identified the unique extremal graph. Furthermore, they established a sharp upper bound on the spectral radius of graphs over
 $ \mathcal{G}(m,F_2)\setminus\left\{K_2 \vee \frac{m-1}{2}K_1\right\} $ for odd $m \geq 17$  and identified the unique extremal graph. It is natural to take this investigation forward by considering the case of even size.

\noindent{\bf{Acknowledgement.}} The research of Abdul Basit Wani is supported by JRF, financial assistance by Department of Science and Technology (DST), Ministry of Science and Technology, Government of India, New Delhi-110016, No. DST/INSPIRE Fellowship/2024/IF240141.

\noindent{\bf Conflict of interest.} The authors declare that they have no conflict of interest.

\noindent{\bf Data Availibility.} Data sharing is not applicable to this article as no datasets were generated or analyzed
during the current study.\\

\end{document}